\author[N.~Murru]{Nadir Murru}
\address{Universit\`a degli Studi di Torino, Department of Mathematics, Torino, Italy}
\email{nadir.murru@unito.it}
\urladdr{\url{http://orcid.org/0000-0003-0509-6278}}
\author[C.~Sanna]{Carlo Sanna}
\address{Universit\`a degli Studi di Torino, Department of Mathematics, Torino, Italy}
\email{carlo.sanna.dev@gmail.com}
\urladdr{\url{http://orcid.org/0000-0002-2111-7596}}
\keywords{Lucas sequence; Fibonacci numbers; $p$-adic valuation; $k$-regular sequence; automatic sequence.}
\subjclass[2010]{Primary: 11B37, 11B85. Secondary: 11A99.}
\title[On the $k$-regularity of the $k$-adic valuation of Lucas sequences]{On the $k$-regularity of the $k$-adic valuation of \\ Lucas sequences}
\newtheorem{thm}{Theorem}[section]
\newtheorem{lem}[thm]{Lemma}
\def\lcm{\operatorname{lcm}}
\begin{document}

\maketitle

\begin{abstract}
For integers $k \geq 2$ and $n \neq 0$, let $\nu_k(n)$ denotes the greatest nonnegative integer $e$ such that $k^e$ divides $n$.
Moreover, let $(u_n)_{n \geq 0}$ be a nondegenerate Lucas sequence satisfying $u_0 = 0$, $u_1 = 1$, and $u_{n + 2} = a u_{n + 1} + b u_n$, for some integers $a$ and $b$. 
Shu and Yao showed that for any prime number $p$ the sequence $\nu_p(u_{n + 1})_{n \geq 0}$ is $p$-regular, while Medina and Rowland found the rank of $\nu_p(F_{n + 1})_{n \geq 0}$, where $F_n$ is the $n$-th Fibonacci number.

We prove that if $k$ and $b$ are relatively prime then $\nu_k(u_{n + 1})_{n \geq 0}$ is a $k$-regular sequence, and for $k$ a prime number we also determine its rank.
Furthermore, as an intermediate result, we give explicit formulas for $\nu_k(u_n)$, generalizing a previous theorem of Sanna concerning $p$-adic valuations of Lucas sequences.
\end{abstract}

\section{Introduction}

For integers $k \geq 2$ and $n \neq 0$, let $\nu_k(n)$ denotes the greatest nonnegative integer $e$ such that $k^e$ divides $n$.
In particular, if $k = p$ is a prime number then $\nu_p(\cdot)$ is the usual $p$-adic valuation.
We shall refer to $\nu_k(\cdot)$ as the \emph{$k$-adic valuation}, although, strictly speaking, for composite $k$ this is not a ``valuation'' in the algebraic sense of the term, since it is not true that $\nu_k(mn) = \nu_k(m) + \nu_k(n)$ for all integers $m,n \neq 0$.

Valuations of sequences with combinatorial meanings have been studied by several authors (see, e.g., \cite{AMM08, Coh99, HZZ12, Len95, Len14, ML14, PS07, San16bis, San16, SM09}).
To this end, an important role is played by the family of $k$-regular sequences, which were first introduced and studied by Allouche and Shallit~\cite{AS92, AS03b, AS03} with the aim of generalizing the concept of automatic sequences.

Given a sequence of integers $s(n)_{n \geq 0}$, its \emph{$k$-kernel} is defined as the set of subsequences
\begin{equation*}
\ker_k(s(n)_{n \geq 0}) := \{s(k^e n + i)_{n \geq 0} : e \geq 0, \; 0 \leq i < k^e\} .
\end{equation*}
Then $s(n)_{n \geq 0}$ is said to be \emph{$k$-regular} if the $\mathbb{Z}$-module $\langle \ker_k(s(n)_{n \geq 0}) \rangle$ generated by its $k$-kernel is finitely generated.
In such a case, the \emph{rank} of $s(n)_{n \geq 0}$ is the rank of this $\mathbb{Z}$-module. 

Allouche and Shallit provided many examples of regular sequences.
In particular, they showed that the sequence of $p$-adic valuations of factorials $\nu_p(n!)_{n \geq 0}$ is $p$-regular \cite[Example~9]{AS92}, and that the sequence of $3$-adic valuations of sums of central binomial coefficients 
\begin{equation*}
\nu_3\!\left(\sum_{i \,=\, 0}^n \binom{2i}{i}\right)_{n \geq 0}
\end{equation*}
is $3$-regular \cite[Example~23]{AS92}.
Furthermore, for any polynomial $f(x) \in \mathbb{Q}[x]$ with no roots in the natural numbers, Bell~\cite{Bell07} proved that the sequence $\nu_p(f(n))_{n \geq 0}$ is $p$-regular if and only if $f(x)$ factors as a product of linear polynomials in $\mathbb{Q}[x]$ times a polynomial with no root in the $p$-adic integers.

Fix two integers $a$ and $b$, and let $(u_n)_{n \geq 0}$ be the Lucas sequence of characteristic polynomial $f(x) = x^2 - ax - b$, i.e., $(u_n)_{n \geq 0}$ is the integral sequence satisfying $u_0 = 0$, $u_1 = 1$, and $u_{n + 2} = a u_{n + 1} + b u_n$, for each integer $n \geq 0$.
Assume also that $(u_n)_{n \geq 0}$ is nondegenerate, i.e., $b \neq 0$ and the ratio $\alpha / \beta$ of the two roots $\alpha, \beta \in \mathbb{C}$ of $f(x)$ is not a root of unity.

Using $p$-adic analysis, Shu and Yao~\cite[Corollary~1]{SY11} proved the following result.

\begin{thm}\label{thm:shu_yao}
For each prime number $p$, the sequence $\nu_p(u_{n+1})_{n \geq 0}$ is $p$-regular.
\end{thm}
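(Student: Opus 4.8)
The plan is to show directly that the $\mathbb{Z}$-module generated by the $p$-kernel of $s(n) := \nu_p(u_{n+1})$ is finitely generated, by first reducing $s$ to a combination of sequences already known to be $p$-regular. The argument splits according to whether $p \nmid b$ or $p \mid b$, the latter being genuinely different because the rank-of-apparition machinery is unavailable there.

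Consider first the main case $p \nmid b$. I would introduce the \emph{rank of apparition} $\tau = \tau(p)$, the least positive integer with $p \mid u_\tau$, and recall (or re-derive) the two standard facts that underlie every valuation computation for Lucas sequences: that $p \mid u_m$ if and only if $\tau \mid m$, and the lifting-the-exponent identity
\[
\nu_p(u_m) = \nu_p(u_\tau) + \nu_p(m/\tau) \qquad (\tau \mid m),
\]
valid for odd $p$ (with an eventually periodic correction term when $p = 2$). Writing $c = \nu_p(u_\tau)$ and using $\nu_p(m/\tau) = \nu_p(m) - \nu_p(\tau)$, this yields the closed form
\[
s(n) = \chi(n)\bigl(c - \nu_p(\tau) + \nu_p(n+1)\bigr),
\]
where $\chi(n)$ is the indicator of the congruence $\tau \mid n+1$, a sequence of period $\tau$.

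The conclusion in this case is then immediate from closure properties. The sequence $\chi$ is purely periodic, hence $p$-automatic and therefore $p$-regular; the sequence $\nu_p(n+1)$ is the standard example of a $p$-regular sequence, its $p$-kernel being contained in the module spanned by itself and the constant sequence $1$; and constants are trivially $p$-regular. Since the $p$-regular sequences are closed under termwise sums and termwise products, the displayed formula exhibits $s$ as $p$-regular, and any eventually periodic correction occurring for $p = 2$ is absorbed into the same framework. For a determination of the rank one would track these finitely many generators explicitly through the $p$-kernel.

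It remains to treat $p \mid b$, which is where the real difficulty lies. If $p \nmid a$, then reducing the recurrence modulo $p$ gives $u_{n+1} \equiv a^n \not\equiv 0 \pmod{p}$, so $s \equiv 0$ is trivially $p$-regular. The hard part is the subcase $p \mid \gcd(a,b)$, which does occur for nondegenerate sequences (for instance $a = b = 2$, $p = 2$): here there is no rank of apparition in the usual sense, infinitely many terms are divisible by $p$, and the valuation grows with a self-similar but non-uniform profile. This is precisely the regime that Shu and Yao handle through $p$-adic analysis, expanding $u_{n+1}$ as a $p$-adic analytic function and using Strassmann's theorem to control its zeros; I would either follow that route or establish the self-similarity of the valuation profile directly and feed it into the $p$-kernel as above. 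I expect this subcase to be the principal obstacle.
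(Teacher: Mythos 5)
Your treatment of the case $p \nmid b$ is correct and is in substance the paper's own argument: the closed form $s(n) = \chi(n)\bigl(\nu_p(u_{\tau(p)}) - \nu_p(\tau(p)) + \nu_p(n+1)\bigr)$ is equivalent to the valuation formula of Theorem~\ref{thm:un_padic} (quoted there from \cite{San16}; apply that formula with $n = \tau(p)$ to see $\varrho_p = \nu_p(u_{\tau(p)}) - \nu_p(\tau(p))$ for odd $p$), including the period-two correction at $p = 2$, and the paper concludes exactly as you do, by combining the periodic indicator (Lemma~\ref{lem:kreg_periodic}), the sequence $\nu_p(n+1)$ (Lemma~\ref{lem:kreg_lin}), and closure under termwise sums and products (Lemma~\ref{lem:kreg_ring}); this is precisely how Theorem~\ref{thm:is_kreg} is proved, of which your argument is the prime-$k$ special case. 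Your remark that $p \mid b$, $p \nmid a$ forces $u_{n+1} \equiv a^n \pmod{p}$, hence an identically zero valuation sequence, also appears in the paper's concluding remarks.

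The genuine gap is the subcase $p \mid \gcd(a,b)$, which you flag but do not prove. ``Establish the self-similarity of the valuation profile directly'' is not an argument: once $p \mid b$, every tool you set up collapses, since $p \mid u_n$ for all $n \geq 2$, the rank of apparition loses its divisibility property, and $\nu_p(u_n)$ is unbounded with non-uniform growth (for $a = b = 2$, $p = 2$ the valuations of $u_1, u_2, \ldots$ run $0,1,1,4,2,3,3,7,\ldots$), so the self-similar structure is exactly what needs proving, by a method you have not supplied. Your other option, ``follow Shu--Yao,'' turns the proposal into a citation of \cite{SY11} rather than a proof. In fairness, the paper is in the same position: Theorem~\ref{thm:shu_yao} is stated there as Shu and Yao's result, obtained by $p$-adic analysis, and the paper's own elementary machinery never touches the case $p \mid b$ (indeed the analogous question for $\gcd(k,b) > 1$ is explicitly left open in its concluding section). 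So you have correctly isolated where the difficulty lies and reproduced the paper's elementary content everywhere else, but as a self-contained proof of Theorem~\ref{thm:shu_yao} the proposal is incomplete.
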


In the special case $a = b = 1$, i.e., when $(u_n)_{n \geq 0}$ is the sequence of Fibonacci numbers $(F_n)_{n \geq 0}$, Medina and Rowland~\cite{MR15} gave an algebraic proof of Theorem~\ref{thm:shu_yao} and also determined the rank of $\nu_p(F_{n+1})_{n \geq 0}$.
Their result is the following.

\begin{thm}\label{thm:med_row}
For each prime number $p$ the sequence $\nu_p(F_{n+1})_{n \geq 0}$ is $p$-regular.
Precisely, for $p \neq 2,5$ the rank of $\nu_p(F_{n+1})_{n \geq 0}$ is $\alpha(p) + 1$, where $\alpha(p)$ is the least positive integer such that $p \mid F_{\alpha(p)}$, while for $p = 2$ the rank is $5$, and for $p = 5$ the rank is $2$.
\end{thm}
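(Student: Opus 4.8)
The plan is to give an algebraic proof resting on an explicit formula for the $p$-adic valuation of Fibonacci numbers, followed by a direct computation of the $\mathbb{Z}$-module generated by the $p$-kernel. Write $s(n) := \nu_p(F_{n+1})$ and let $\alpha = \alpha(p)$ be the rank of apparition. The starting point is the classical fact that $p \mid F_m$ if and only if $\alpha \mid m$, together with a lifting-type identity: for $p \neq 2,5$ one has $\gcd(p,\alpha) = 1$ and
\[
\nu_p(F_m) = \begin{cases} \nu_p(F_\alpha) + \nu_p(m), & \alpha \mid m, \\ 0, & \alpha \nmid m; \end{cases}
\]
for $p = 5$ it degenerates to $\nu_5(F_m) = \nu_5(m)$, and for $p = 2$ it acquires exceptional values ($\nu_2(F_m) = 1$ when $m \equiv 3 \pmod 6$ and $\nu_2(F_m) = \nu_2(m) + 2$ when $6 \mid m$). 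I would treat $p \neq 2,5$ as the main case and the two small primes separately.

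For the upper bound, set $\tau = \nu_p(F_\alpha)$, let $g_r(n) = [\,n \equiv r \pmod \alpha\,]$ be the indicator of the residue class $r$, and put $h(n) = g_{\alpha-1}(n)\,\nu_p(n+1)$, so that $s = \tau g_{\alpha-1} + h$. I would evaluate each kernel element $s(p^e n + i)$ directly from the formula. Since $\gcd(p,\alpha) = 1$, the condition $\alpha \mid p^e n + i + 1$ singles out one residue class $n \equiv n_0 \pmod \alpha$. Writing $i+1 = p^j c$ with $p \nmid c$, one checks that $j < e$ forces $\nu_p(p^e n + i + 1) = j$ to be constant on that class, whence $s(p^e n + i) = (\tau + j)\,g_{n_0}$, while the sole remaining case $i = p^e - 1$ yields $s(p^e n + p^e - 1) = (\tau + e)\,g_{\alpha-1} + h$. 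Thus every element of $\ker_p(s)$ lies in $\langle g_0, \dots, g_{\alpha-1}, h\rangle$, which reproves regularity and gives $\operatorname{rank} \le \alpha + 1$.

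For the lower bound, I would show that the $\alpha + 1$ sequences $g_0, \dots, g_{\alpha-1}, h$ themselves lie in the module and are independent. Using the Chinese remainder theorem (legitimate because $\gcd(p,\alpha) = 1$), for any target residue $r$ and any $j \in \{0,1\}$ I can pick $e$ large and $i < p^e$ realizing $n_0 = r$ and $\nu_p(i+1) = j$, thereby placing both $\tau g_r$ and $(\tau+1)g_r$ in the module; their difference is $g_r$. Hence all $g_r$ belong to the module, and so does $h = s - \tau g_{\alpha-1}$. Independence is immediate: the $g_r$ are indicators of distinct classes, whereas $h$ is unbounded (for instance $h(\alpha p^m - 1) = \nu_p(\alpha p^m) = m$) and therefore independent of the bounded periodic $g_r$. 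This pins the rank at exactly $\alpha + 1$.

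Finally, for $p = 5$ the identity $s(n) = \nu_5(n+1)$ collapses the kernel onto $\langle \mathbf{1}, s\rangle$, giving rank $2$; for $p = 2$ the extra additive constant together with the anomalous class $m \equiv 3 \pmod 6$ must be tracked, and the same bookkeeping then yields rank $5$. The \emph{main obstacle} is the lower bound in the generic case: producing each indicator $g_r$ with coefficient exactly $1$ forces one to combine kernel elements whose valuation parameter $j$ differs by one, which demands the CRT freedom and careful control of the constraint $i < p^e$; the exceptional primes $2$ and $5$ require separate, more delicate versions of this argument.
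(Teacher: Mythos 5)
Your main case $p \neq 2,5$ is correct and complete, and in substance it is the paper's own argument specialized to Fibonacci numbers: the paper deduces this theorem from its general Theorem~\ref{thm:rank}, whose proof likewise starts from the explicit valuation formula (Theorem~\ref{thm:un_padic}; for Fibonacci this is exactly the Lengyel formula you quote), splits $\nu_p(F_{n+1})$ into indicator sequences of residue classes modulo $\alpha(p)$ plus a $\nu_p(n+1)$-type term, and identifies the kernel module. The differences are in execution. For the upper bound you evaluate every kernel element $s(p^e n+i)$ at all levels $e$ and embed $\langle\ker_p(s(n)_{n\geq 0})\rangle$ into the free module $\langle g_0,\dots,g_{\alpha-1},h\rangle$; the paper computes only the level-one decimations and gets the rest by induction (Lemma~\ref{lem:kreg_sr}). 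For the lower bound you realize $\tau g_r$ and $(\tau+1)g_r$ as kernel elements via CRT and subtract, while the paper performs the same subtraction trick through the explicit identity $s_{i\bmod\tau(p)}(n)=s(p^3n+pi+p-1)-s(p^2n+i)$, with no need for the CRT discussion. Your version is self-contained for Fibonacci; the paper's buys the statement for all nondegenerate Lucas sequences with $p\nmid b$. The case $p=5$ is also fine as you state it.

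The case $p=2$, however, is a genuine gap and not merely compressed writing. ``The same bookkeeping'' would produce $\alpha(2)+1=4$ generators, namely $g_0,g_1,g_2,h$, not five; the rank equals $5$ precisely because the $2$-adic formula is not of the generic shape on multiples of $3$: the value is $1$ at $m\equiv 3\pmod 6$ but $\nu_2(m)+2$ at $6\mid m$, and the discrepancy $\nu_2(F_6)-\nu_2(F_3)-1=1\neq 0$ creates a fifth independent generator. Concretely, your decomposition $s=\tau g_{\alpha-1}+h$ fails for $p=2$, and $s$ itself no longer lies in $\langle g_0,g_1,g_2,h\rangle$: on the class $n\equiv 2\pmod 3$, $s$ takes the value $1$ at even $n$ and $\nu_2(n+1)+2$ at odd $n$, which no $\mathbb{Z}$-combination $c_2g_2+c_hh$ can match (matching even $n$ forces $c_2=1$, and then $1+c_h\nu_2(n+1)=2+\nu_2(n+1)$ cannot hold for all odd $n$ in the class). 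So one must enlarge the generating set to $\{s,g_0,g_1,g_2,h\}$ --- equivalently, as the paper does, write $\nu_2(F_{n+1})=s(n)+t(n)$ with $t$ supported on $n+1\equiv 0\pmod 6$ with constant value $d=\nu_2(F_6)-\nu_2(F_3)-1$ --- redo the upper bound with five generators, and prove the five are linearly independent. That independence, which uses the specific value $\nu_2(F_6)=3$, is exactly the step separating rank $5$ from rank $4$, and it is absent from your sketch.
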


In this paper, we extend both Theorem~\ref{thm:shu_yao} and Theorem~\ref{thm:med_row} to $k$-adic valuations with $k$ relatively prime to $b$.
Let $\Delta := a^2 + 4b$ be the discriminant of $f(x)$.
Also, for each positive integer $m$ relatively prime to $b$ let $\tau(m)$ denotes the \emph{rank of apparition} of $m$ in $(u_n)_{n \geq 0}$, i.e., the least positive integer $n$ such that $m \mid u_n$ (which is well-defined, see, e.g.,~\cite{Ren13}).

Our first two results are the following.

\begin{thm}\label{thm:is_kreg}
If $k \geq 2$ is an integer relatively prime to $b$, then the sequence $\nu_k(u_{n+1})_{n \geq 0}$ is $k$-regular.
\end{thm}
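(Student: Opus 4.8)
The plan is to first establish the explicit formula for $\nu_k(u_N)$ advertised as our intermediate result, and then read off $k$-regularity from it by a direct analysis of the $k$-kernel. The starting point is the elementary identity
\[
\nu_k(N) = \min_{p \mid k} \left\lfloor \frac{\nu_p(N)}{\nu_p(k)} \right\rfloor ,
\]
valid for every nonzero integer $N$, which reduces the $k$-adic valuation to the ordinary $p$-adic valuations at the primes $p \mid k$. Writing $\kappa_p := \nu_p(k)$ and applying this with $N = u_{n+1}$, it suffices to understand each $\nu_p(u_{n+1})$. For this I would invoke the generalization of Sanna's formula: for $p \nmid b$ the valuation $\nu_p(u_m)$ vanishes unless $\tau(p) \mid m$, and when $\tau(p) \mid m$ it is an affine function of $\nu_p(m)$, namely $\nu_p(u_{\tau(p)}) + \nu_p(m)$ for $p$ odd with $p \nmid \Delta$, together with the usual corrections when $p = 2$ or $p \mid \Delta$. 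Combining the two ingredients yields the desired closed form: $\nu_k(u_{n+1}) = 0$ unless $\tau(k) \mid n+1$, while on the residue class $n \equiv -1 \pmod{\tau(k)}$ it equals $\min_{p \mid k} \lfloor (C_p + \nu_p(n+1))/\kappa_p \rfloor$ for explicit constants $C_p$ depending only on $a$, $b$, and $k$.

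With the formula in hand I would prove $k$-regularity by showing that the $\mathbb{Z}$-module generated by the subsequences $s(k^e n + i) = \nu_k(u_{k^e n + i + 1})$, with $e \geq 0$ and $0 \le i < k^e$, is finitely generated. The single-prime case ($k = p$) is the transparent one and recovers Theorem~\ref{thm:shu_yao}: there the formula exhibits $\nu_p(u_{n+1})$ as the product of the $\tau(p)$-periodic indicator of $\{\,n : \tau(p) \mid n+1\,\}$ with the affine-in-$\nu_p(n+1)$ part, and since periodic sequences are $k$-regular and $\nu_p(n+1)$ is itself $k$-regular whenever $p \mid k$ (an easy direct check on its kernel), closure of the class of $k$-regular sequences under sums and products finishes the argument. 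The computation driving this is that for fixed $e$ and $i$ the subsequence $n \mapsto \nu_p(k^e n + i + 1)$ is either constant in $n$ or of the form $e\kappa_p + \nu_p(c n + c')$ with $\gcd(c, p) = 1$, i.e.\ a constant shift of a sequence of the same type, which forces the relevant kernels to stabilize after finitely many digits.

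The genuine difficulty, which I expect to be the main obstacle, lies in the composite case: the outer $\min_{p \mid k}$ does not in general preserve $k$-regularity, so closure properties alone do not suffice. To overcome this I would argue directly that only finitely many distinct subsequences arise. Applying the case analysis above to all $p \mid k$ simultaneously, each $s(k^e n + i)$ can be written as an explicit additive constant, depending on $e$ and $i$ and absorbed into multiples of the single sequence $\mathbf{1}$, plus a sequence of the form $n \mapsto \min_{p \mid k} \lfloor (C_p' + \nu_p(c n + c'))/\kappa_p \rfloor$ drawn from a finite list determined by the residues of the shifts $c'$ modulo a bounded power of $k$. The per-prime upper bound $\nu_k(u_{n+1}) \le (C_p + \nu_p(n+1))/\kappa_p$ keeps these subsequences from growing faster than a single logarithm in each coordinate, which is precisely what guarantees that the list of shifted minima closes up into a finite generating set. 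Assembling these relations shows that $\langle \ker_k(s) \rangle$ is finitely generated, and hence that $\nu_k(u_{n+1})_{n \geq 0}$ is $k$-regular.
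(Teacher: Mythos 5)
Your first paragraph is essentially the paper's Theorem~\ref{thm:un_kadic} and is sound, up to one wrinkle: for $p=2$ your ``constant'' $C_2$ depends on the parity of $n+1$ (it is $\nu_2(u_3)$ or $\nu_2(u_6)-1$), and that parity is not determined on the class $\tau(k)\mid n+1$ when $\tau(k)$ is odd, so the formula must be split into two parity cases. The prime case in your second paragraph is also fine. The genuine gap is in the composite case. First, your claimed normal form for the kernel elements is not correct: for fixed $(e,i)$ the primes $p\mid k$ split into those with $\nu_p(i+1)\geq \kappa_p e$, whose coordinate in the minimum is $e+\lfloor(C_p+\nu_p(m_p n+j_p))/\kappa_p\rfloor$ with $m_p=k^e/p^{\kappa_p e}$ and $j_p=(i+1)/p^{\kappa_p e}$, and those with $\nu_p(i+1)<\kappa_p e$, whose coordinate is constant in $n$; what arises is therefore a minimum over a \emph{proper subset} of the primes, \emph{capped} by constants, with linear forms whose coefficients $m_p$ vary unboundedly with $e$ --- not an additive constant plus $\min_{p\mid k}$ of the standard shape. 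Second, and fatally, your criterion for finite generation --- that the subsequences grow at most logarithmically --- is not a valid argument: logarithmic growth does not imply $k$-regularity. A counterexample sits inside this very problem: $\nu_2(n+1)$ is $2$-regular and grows logarithmically, yet it is \emph{not} $6$-regular, since its $6$-kernel contains $\nu_2(6^e n+2^e)_{n\geq 0}=\bigl(e+\nu_2(3^e n+1)\bigr)_{n\geq 0}$ for every $e$, and the sequences $\nu_2(3^e n+1)_{n\geq 0}$, $e\geq 0$, are $\mathbb{Z}$-linearly independent (one has $\nu_2(3^e n+1)\geq m$ exactly on the class $n\equiv -3^{-e} \bmod 2^m$, and these classes are pairwise distinct for large $m$). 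The same example shows that your parenthetical claim that ``$\nu_p(n+1)$ is $k$-regular whenever $p\mid k$'' is false unless $k$ is a power of $p$, so no argument treating the per-prime valuations as $k$-regular objects can be repaired.

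The idea your proposal is missing is precisely the one you dismissed when asserting that ``closure properties alone do not suffice.'' Read Lemma~\ref{lem:kadic_min} \emph{backwards}: since each $\varrho_{p_i}(n)$ depends only on the parity of $n$, setting $c_k(n):=\prod_i p_i^{\varrho_{p_i}(n)}$ (a quantity taking only the two values $c_k(1)$ and $c_k(2)$) one gets
\begin{equation*}
\min_{i=1,\ldots,h}\left\lfloor \frac{\nu_{p_i}(n)+\varrho_{p_i}(n)}{a_i}\right\rfloor=\nu_k\bigl(c_k(n)\,n\bigr),
\end{equation*}
so the troublesome minimum of floors is itself the $k$-adic valuation of a linear function of $n$. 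Hence
\begin{equation*}
\nu_k(u_{n+1})=\nu_k\bigl(c_k(1)(n+1)\bigr)\,s(n)+\nu_k\bigl(c_k(2)(n+1)\bigr)\,t(n),
\end{equation*}
where $s(n)$ and $t(n)$ are the periodic indicators of the odd, respectively even, values of $n+1$ divisible by $\tau(p_1\cdots p_h)$. Now $\nu_k(n+1)_{n\geq 0}$ is $k$-regular even for composite $k$ (the kernel computation of Lemma~\ref{lem:kreg_lin} uses only $\nu_k(km)=\nu_k(m)+1$), each factor $\nu_k(c(n+1))_{n\geq 0}$ is an arithmetic-progression subsequence of it and hence $k$-regular by Lemma~\ref{lem:kreg_linsub}, and Lemmas~\ref{lem:kreg_periodic} and~\ref{lem:kreg_ring} finish the proof. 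The moral is that one must keep $\nu_k$ intact --- it is the single sequence $\nu_k(\cdot)$ that is $k$-regular, not its per-prime constituents --- whereas your plan decomposes into primes at the outset and is then forced to fight the minimum by hand.
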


\begin{thm}\label{thm:rank}
Let $p$ be a prime number not dividing $b$, and let $r$ be the rank of $\nu_p(u_{n+1})_{n \geq 0}$.
\begin{enumerate}[$\phantom{m}\bullet$]
\item If $p \mid \Delta$ then:
\begin{enumerate}[$\bullet$]
\item $r = 2$ if $p \in \{2, 3\}$ and $\nu_p(u_p) = 1$, or if $p \geq 5$;
\item $r = 3$ if $p \in \{2, 3\}$ and $\nu_p(u_p) \neq 1$.
\end{enumerate}
\item If $p \nmid \Delta$ then:
\begin{enumerate}[$\bullet$]
\item $r = 5$ if $p = 2$ and $\nu_2(u_6) \neq \nu_2(u_3) + 1$;
\item $r = \tau(p) + 1$ if $p > 2$, or if $p = 2$ and $\nu_2(u_6) = \nu_2(u_3) + 1$.
\end{enumerate}
\end{enumerate}
\end{thm}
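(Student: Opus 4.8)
The plan is to compute $r$ directly from the structure of the $p$-kernel of $s(n) := \nu_p(u_{n+1})$, using the explicit formula for $\nu_p(u_m)$ that we establish as an intermediate result (generalizing Sanna). By Theorem~\ref{thm:is_kreg} the module $\langle \ker_p(s) \rangle$ is finitely generated, so it suffices to exhibit a generating set and determine its rank. Recall the shape of that formula in the principal case $p \nmid \Delta$ with $p$ odd: the roots of $f$ are distinct modulo $p$, so $\tau \mid p^2 - 1$ and in particular $p \nmid \tau$, and
\begin{equation*}
\nu_p(u_m) = \begin{cases} 0, & \tau \nmid m, \\ v + \nu_p(m), & \tau \mid m, \end{cases}
\end{equation*}
where $v := \nu_p(u_\tau) \geq 1$ and we have used $p \nmid \tau$ to replace $\nu_p(m/\tau)$ by $\nu_p(m)$. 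Thus $s$ is supported on the progression $n \equiv -1 \pmod{\tau}$, on which it grows like $v + \nu_p(n+1)$. The first step is to record this description, isolating the interaction between the $\tau$-periodic ``detector'' of divisibility by $\tau$ and the $p$-adic factor, together with the analogous but more delicate formulas for $p = 2$ and for $p \mid \Delta$.

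The second, and main, step is the kernel computation in this principal case. Writing $j := i+1 \in \{1, \dots, p^e\}$ and $M := p^e n + j$, so that $s(p^e n + i) = \nu_p(u_M)$, I would split according to $\nu_p(j)$. Because $\gcd(p,\tau) = 1$, the condition $\tau \mid M$ cuts out a single residue class $n \equiv c_{e,j} \pmod{\tau}$; as $e$ and $j$ vary, $c_{e,j}$ runs over all of $\mathbb{Z}/\tau$. When $\nu_p(j) < e$ one has $\nu_p(M) = \nu_p(j)$ on that class, so the kernel element is the scalar multiple $(v + \nu_p(j))\,\mathbf{1}[n \equiv c_{e,j} \pmod{\tau}]$ of a residue-class indicator; comparing the cases $\nu_p(j) = 0$ and $\nu_p(j) = 1$ (the latter needing $e \geq 2$) shows that each indicator $\mathbf{1}[n \equiv c \pmod{\tau}]$ itself lies in the module. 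The only remaining case is $j = p^e$, which yields $s(p^e n + i) = s(n) + e\,\mathbf{1}[n \equiv -1 \pmod{\tau}]$ and so contributes nothing beyond $s$ and an indicator. Consequently every element of $\ker_p(s)$ lies in the module generated by the $\tau$ indicators $\mathbf{1}[n \equiv c \pmod{\tau}]$ together with $s$; since these $\tau + 1$ sequences are $\mathbb{Z}$-linearly independent—the indicators being a basis of the $\tau$-periodic sequences, and $s$ being unbounded hence not among them—we obtain $r = \tau + 1$. This covers $p > 2$, and, once the clean $2$-adic formula is in hand, $p = 2$ with $\nu_2(u_6) = \nu_2(u_3) + 1$ (where $\tau(2) = 3$).

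The third step treats the exceptional branches, where the valuation formula degenerates or acquires a correction. When $p \mid \Delta$ one has $\tau = p$, so divisibility by $\tau$ is no longer an independent periodic condition but is absorbed into $\nu_p$; in the clean subcase the formula collapses to $s(n) = \nu_p(n+1)$, the standard $p$-regular sequence of rank $2$, whence $r = 2$. The small-prime anomalies—$p \in \{2,3\}$ with $\nu_p(u_p) \neq 1$, and $p = 2 \nmid \Delta$ with $\nu_2(u_6) \neq \nu_2(u_3) + 1$—are handled by feeding the corrected explicit formula (in which the base value $\nu_p(u_p)$ exceeds $1$, respectively $2$-adic doubling no longer raises the valuation by exactly $1$) into the same kernel machinery and checking that the correction contributes precisely one further independent generator, raising the rank to $3$, respectively to $5$.

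I expect the main obstacle to be exactly these exceptional $p = 2$ and $p = 3$ cases. The difficulty is twofold. First, the explicit formula is genuinely piecewise there, with the branch governed by $\nu_2(m) = 0$ versus $\nu_2(m) \geq 1$ carrying different additive constants tied to $\nu_2(u_3)$ and $\nu_2(u_6)$, so the clean identity $s(p^e n + i) = s(n) + e\,\mathbf{1}[\,\cdot\,]$ no longer holds and the reproduction of $s$ inside the kernel must be tracked more carefully. Second, one must pin down the rank \emph{exactly}: beyond exhibiting enough generators, this means proving minimality—separating the bounded periodic generators from the single unbounded valuation generator and verifying the requisite $\gcd$ conditions so that the integer indicators really lie in the $\mathbb{Z}$-module and not merely in its rational span.
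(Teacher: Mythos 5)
Your proposal is correct and, in its core case $p \nmid \Delta$, essentially complete; but it reaches the rank by a genuinely different mechanism than the paper. The paper splits $\nu_p(u_{n+1}) = s(n) + t(n)$ via Theorem~\ref{thm:un_padic}, proves each summand $p$-regular by the one-step decimation criterion (Lemma~\ref{lem:kreg_sr}) with the indicator sequences $s_j$ as auxiliary generators, recovers the $s_j$ inside $\langle\ker_p(s(n)_{n\geq 0})\rangle$ through the identity $s_{i \bmod \tau(p)}(n)_{n \geq 0} = s(p^3 n + pi + p-1)_{n \geq 0} - s(p^2 n + i)_{n \geq 0}$, and finally merges the summands with Lemma~\ref{lem:kreg_ring}; you instead compute every decimation $s(p^e n + i)$ of the full sequence at once, observing that for $j := i+1$ with $\nu_p(j) < e$ it is the scalar multiple $(v + \nu_p(j))$ of a single residue-class indicator modulo $\tau(p)$, while $j = p^e$ returns $s(n) + e \cdot \mathbf{1}[\tau(p) \mid n+1]$, and you then extract the bare indicators by differencing a $(v+1)$-multiple against a $v$-multiple. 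Your route buys something concrete: it identifies the $\mathbb{Z}$-module $\langle\ker_p(\nu_p(u_{n+1})_{n\geq 0})\rangle$ exactly, so the lower bound on the rank is automatic, whereas the paper's route leans on the ``union of generators'' statement of Lemma~\ref{lem:kreg_ring}, which in general yields only the inclusion $\langle\ker_k((s(n)+t(n))_{n \geq 0})\rangle \subseteq \langle A \cup B\rangle$ (equality, hence the exact rank, needs the extra check that the individual generators are recoverable from the kernel of the sum --- true in the cases at hand, but not spelled out there). Two debts remain on your side: first, the existence, for every residue class $c$ modulo $\tau(p)$, of pairs $(e,j)$ with $\nu_p(j)=0$ and with $\nu_p(j)=1$ realizing $c_{e,j} = c$ needs a short counting/CRT argument (take $p^e \geq 2p\,\tau(p)$ and use $\gcd(p,\tau(p))=1$); second, the exceptional branches are only sketched, though the same computation does deliver them --- for $p \mid \Delta$ with $\varrho_p \neq 0$ the decimations produce constants and shifts of $\nu_p(n+1)$, giving generators $\nu_p(n+1)_{n\geq 0}$, $(1)_{n\geq 0}$, and $\varrho_p\cdot\mathbf{1}[p \mid n+1]$, hence rank $3$, and for $p = 2 \nmid \Delta$ with $\nu_2(u_6) \neq \nu_2(u_3)+1$ exactly one mod-$6$ generator is added to the four of the clean case, hence rank $5$ --- matching the paper's conclusions.
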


Note that Theorem~\ref{thm:med_row} follows easily from our Theorem~\ref{thm:rank}, since in the case of Fibonacci numbers $b = 1$, $\Delta = 5$, $\nu_2(F_3) = 1$, $\nu_2(F_6) = 3$, and $\tau(p) = \alpha(p)$.

As a preliminary step in the proof of Theorem~\ref{thm:is_kreg}, we obtain some formulas for the $k$-adic valuation $\nu_k(u_n)$, which generalize a previous result of the second author.
Precisely, Sanna~\cite{San16} proved the following formulas for the $p$-adic valuation of $u_n$.

\begin{thm}\label{thm:un_padic}
If $p$ is a prime number such that $p \nmid b$, then
\begin{equation*}
\nu_p(u_n) =
\begin{cases}
\nu_p(n) + \varrho_p(n) & \emph{ if } \tau(p) \mid n , \\
0 & \emph{ if } \tau(p) \nmid n ,
\end{cases}
\end{equation*}
for each positive integer $n$, where
\begin{equation*}
\varrho_2(n) := 
\begin{cases}
\nu_2(u_3) & \emph{ if } 2 \nmid \Delta , \; 2 \nmid n , \\
\nu_2(u_6) - 1 & \emph{ if } 2 \nmid \Delta , \; 2 \mid n , \\
\nu_2(u_2) - 1 & \emph{ if } 2 \mid \Delta ,
\end{cases}
\end{equation*}
and
\begin{equation*}
\varrho_p(n) = \varrho_p := 
\begin{cases}
\nu_p(u_{\tau(p)}) & \emph{ if } p \nmid \Delta , \\
\nu_3(u_3) - 1 & \emph{ if } p \mid \Delta , \; p = 3, \\
0 & \emph{ if } p \mid \Delta , \; p \geq 5 ,
\end{cases}
\end{equation*}
for $p \geq 3$.
\end{thm}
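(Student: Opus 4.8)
The plan is to reduce everything to the range $\tau(p) \mid n$ and then to compute $\nu_p(u_n)$ by passing to the two roots of $f$. \emph{First}, I would record the classical divisibility law for nondegenerate Lucas sequences: when $p \nmid b$ one has $p \mid u_n$ if and only if $\tau(p) \mid n$. This follows from the addition formula $u_{m+n} = u_{m+1}u_n + b\,u_m u_{n-1}$ together with the fact that $u_m \mid u_n$ whenever $m \mid n$, which forces the zero set $\{n : p \mid u_n\}$ to be exactly $\tau(p)\mathbb{Z}_{\ge 0}$. This settles the branch $\tau(p) \nmid n$, where $\nu_p(u_n) = 0$, and leaves the computation of $\nu_p(u_n)$ when $\tau := \tau(p)$ divides $n$.

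\emph{Next}, writing $\alpha,\beta$ for the roots of $f$ and using $u_n = (\alpha^n - \beta^n)/(\alpha-\beta)$, I would split on $p \mid \Delta$ versus $p \nmid \Delta$; note $p \nmid b = -\alpha\beta$, so both roots are $p$-adic units. For $p$ odd with $p \mid \Delta$ the cleanest tool is the polynomial identity
\[
u_n = \sum_{i \ge 0} \binom{n}{2i+1}\left(\frac a2\right)^{n-2i-1}\left(\frac{\Delta}{4}\right)^{i},
\]
obtained by cancelling the even-degree terms in $\alpha^n - \beta^n$. Since $p \mid \Delta = a^2+4b$ and $p \nmid b$ force $p \nmid a$, the term $i=0$ equals $n(a/2)^{n-1}$ and has valuation exactly $\nu_p(n)$; a short estimate $\nu_p\binom{n}{2i+1} + i\,\nu_p(\Delta) \ge \nu_p(n) + (i - \nu_p(2i+1)) > \nu_p(n)$, valid for $i \ge 1$ when $p \ge 5$, shows this term strictly dominates, so $\nu_p(u_n) = \nu_p(n)$ and $\varrho_p = 0$. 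The same identity gives $\tau(p) = p$ here, consistent with $\tau(p)\mid n \iff p \mid n$.

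\emph{For $p \nmid \Delta$} I would use lifting the exponent. In this case $\tau$ is the multiplicative order of $\alpha/\beta$ in the residue field, so $\tau$ divides $p^{f}-1$ for some $f$ and in particular $p \nmid \tau$, whence $\nu_p(\tau m) = \nu_p(m)$. Writing $n = \tau m$ and $\delta = \beta^\tau$, $\eta = (\alpha/\beta)^\tau \equiv 1 \pmod{\mathfrak p}$, the factorization
\[
\frac{u_{\tau m}}{u_\tau} = \delta^{\,m-1}\,\frac{\eta^m - 1}{\eta - 1}
\]
holds, with $\delta$ a unit and $\mathfrak p$ unramified. The classical lifting-the-exponent lemma then yields $\nu_{\mathfrak p}\!\left((\eta^m-1)/(\eta-1)\right) = \nu_p(m)$ for odd $p$, so $\nu_p(u_{\tau m}) = \nu_p(u_\tau) + \nu_p(m) = \nu_p(n) + \nu_p(u_{\tau(p)})$, giving $\varrho_p = \nu_p(u_{\tau(p)})$.

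\emph{The main obstacle} is the prime $p = 2$, together with the borderline prime $p = 3$ and the ramified sub-case of $p \mid \Delta$. At $p = 2$ the lifting-the-exponent lemma acquires its well-known extra term and splits according to the parity of the exponent, $\nu_2(\eta^m-1)$ being $\nu_2(\eta-1)$ for odd $m$ but $\nu_2(\eta^2-1)+\nu_2(m)-1$ for even $m$; carrying this through the factorization above (with $\tau(2)=3$ when $2 \nmid \Delta$ and $\tau(2)=2$ when $2 \mid \Delta$) is precisely what produces the three-way definition of $\varrho_2$ in terms of $u_2,u_3,u_6$. Likewise, at $p = 3$ the estimate in the $p \mid \Delta$ case degenerates (the $i=1$ term ties the leading term), so the constant $\nu_3(u_3)-1$ must be extracted by treating $u_3$ as a base value. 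The careful bookkeeping of these correction terms, and the valuation estimates in the possibly ramified extensions, is where the real work lies.
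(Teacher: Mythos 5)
Your route is necessarily different from the paper's, because the paper does not prove this theorem at all: it quotes it from Sanna's earlier work \cite{San16} (where the proof is elementary, via integer identities among Lucas terms and induction, with no $p$-adic analysis in extensions). Judged on its own terms, the parts of your argument that you actually carry out are correct: the equivalence $p \mid u_n \Leftrightarrow \tau(p) \mid n$; the case $p \geq 5$, $p \mid \Delta$, where your identity $u_n = \sum_{i \geq 0} \binom{n}{2i+1}(a/2)^{n-2i-1}(\Delta/4)^i$ is valid $p$-adically and the bound $i - \nu_p(2i+1) > 0$ (true since $2i+1 < 5^i \leq p^i$) makes the $i=0$ term strictly dominate, giving $\nu_p(u_n) = \nu_p(n)$; the case $p$ odd, $p \nmid \Delta$, where the unramified lifting-the-exponent argument together with $p \nmid \tau(p)$ yields $\varrho_p = \nu_p(u_{\tau(p)})$; and the case $p = 2 \nmid \Delta$, where $\mathbb{Q}_2(\sqrt{\Delta})$ is unramified so the parity-split LTE at $2$ carries over and produces $\nu_2(u_3)$ (odd $n$) and $\nu_2(u_6)-1$ (even $n$) exactly as claimed.

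The genuine gap is that the two cases responsible for the remaining exceptional constants --- $p = 3$ with $3 \mid \Delta$, and $p = 2$ with $2 \mid \Delta$ --- are never proved: your proposal ends by declaring them ``where the real work lies,'' and these constants ($\nu_3(u_3)-1$ and $\nu_2(u_2)-1$) are precisely the nontrivial content of the statement. Concretely, for $p = 3$ with $\nu_3(\Delta) = 1$ the extension is ramified and $w(\alpha/\beta - 1) = \tfrac12 = \tfrac{1}{p-1}$ sits exactly on the boundary where LTE fails; correspondingly, in your binomial identity the $i=1$ term genuinely cancels against the $i=0$ term (take $a=1$, $b=17$, so $\Delta = 69$ and $u_3 = 18$, whence $\nu_3(u_3) = 2$ and $\varrho_3 = 1 \neq 0$), so no term-domination argument can close this case and ``treating $u_3$ as a base value'' needs an actual mechanism. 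One that works: pass to the Lucas sequence $u_{3m}/u_3$, whose roots are $\alpha^3, \beta^3$ and whose discriminant $\Delta u_3^2$ has $3$-adic valuation at least $3$, and only then apply the ramified LTE. Likewise for $p = 2 \mid \Delta$ you must split $4 \mid a$ (here $\mathbb{Q}_2(\sqrt{\Delta/4})$ is unramified or trivial and the parity-split LTE gives $\nu_2(u_n) = \nu_2(n) + \nu_2(a) - 1$) from $a \equiv 2 \pmod 4$ (here the extension is ramified with $w(\alpha/\beta - 1) \geq \tfrac32 > 1$, so the strong form of LTE applies and gives $\nu_2(u_n) = \nu_2(n)$, matching $\nu_2(u_2) - 1 = 0$). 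All of this is within reach of your tools, but as written the proposal stops short of establishing the stated values of $\varrho_2$ and $\varrho_3$, so it is not yet a proof of the theorem.
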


Actually, Sanna's result~\cite[Theorem~1.5]{San16} is slightly different but it quickly turns out to be equivalent to Theorem~\ref{thm:un_padic} using \cite[Lemma~2.1(v), Lemma~3.1, and Lemma~3.2]{San16}.
Furthermore, in Sanna's paper it is assumed $\gcd(a,b) = 1$, but the proof of \cite[Theorem~1.5]{San16} works exactly in the same way also for $\gcd(a,b) \neq 1$.

From now on, let $k = p_1^{a_1} \dots p_h^{a_h}$ be the prime factorization of $k$, where $p_1 < \cdots < p_h$ are prime numbers and $a_1, \ldots, a_h$ are positive integers.

We prove the following generalization of Theorem~\ref{thm:un_padic}.

\begin{thm}\label{thm:un_kadic}
If $k \geq 2$ is an integer relatively prime to $b$, then
\begin{equation*}
\nu_k(u_n) = 
\begin{cases}
\nu_k(c_k(n) n) & \emph{ if } \tau(p_1 \cdots p_h) \mid n , \\
0 & \emph{ if } \tau(p_1 \cdots p_h) \nmid n ,
\end{cases}
\end{equation*}
for any positive integer $n$, where
\begin{equation*}
c_k(n) := \prod_{i \,=\, 1}^h p_i^{\varrho_{p_i}\!(n)} .
\end{equation*}
\end{thm}

Note that Theorem~\ref{thm:un_kadic} is indeed a generalization of Theorem~\ref{thm:un_padic}.
In fact, if $k = p$ is a prime number then obviously
\begin{equation*}
\nu_p(c_p(n) n) = \nu_p(p^{\varrho_p(n)} n) = \nu_p(n) + \varrho_p(n) ,
\end{equation*}
for each positive integer $n$.

\section{Preliminaries}

In this section we collect some preliminary facts needed to prove the results of this paper.
We begin with some lemmas on $k$-regular sequences.

\begin{lem}\label{lem:kreg_ring}
If $s(n)_{n \geq 0}$ and $t(n)_{n \geq 0}$ are two $k$-regular sequences, then $(s(n) + t(n))_{n \geq 0}$ and $s(n) t(n)_{n \geq 0}$ are $k$-regular too.
Precisely, if $A$ is a finite set of generators of $\langle \ker_k(s(n)_{n \geq 0}) \rangle$ and $B$ is a finite set of generators of $\langle \ker_k(t(n)_{n \geq 0}) \rangle$, then $A \cup B$ is a set of generators of $\langle \ker_k((s(n) + t(n))_{n \geq 0}) \rangle$.
\end{lem}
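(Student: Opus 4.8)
The plan is to reduce both claims to the single observation that extracting a kernel subsequence distributes over componentwise sum and product, so that a generic generator of $\ker_k((s+t)(n)_{n \geq 0})$ or of $\ker_k((st)(n)_{n \geq 0})$ decomposes into pieces already controlled by $A$ and $B$. Throughout I regard integer sequences as elements of the commutative ring $\mathbb{Z}^{\mathbb{N}}$ under componentwise addition and multiplication, so that each $\langle \ker_k(s(n)_{n \geq 0})\rangle$ is a $\mathbb{Z}$-submodule of $\mathbb{Z}^{\mathbb{N}}$ and $A, B$ are finite subsets of the respective modules.

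For the sum, I would fix $e \geq 0$ and $0 \leq i < k^e$ and write the identity of sequences
\begin{equation*}
(s(k^e n + i) + t(k^e n + i))_{n \geq 0} = (s(k^e n + i))_{n \geq 0} + (t(k^e n + i))_{n \geq 0}.
\end{equation*}
The two summands on the right are, by definition, elements of $\ker_k(s(n)_{n \geq 0})$ and $\ker_k(t(n)_{n \geq 0})$, hence they lie in $\langle A \rangle$ and $\langle B \rangle$ respectively. Consequently every generator of $\ker_k((s+t)(n)_{n \geq 0})$ lies in $\langle A \rangle + \langle B \rangle = \langle A \cup B \rangle$, which yields $\langle \ker_k((s+t)(n)_{n \geq 0})\rangle \subseteq \langle A \cup B \rangle$. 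Since $A \cup B$ is finite, this module is finitely generated, so $(s(n) + t(n))_{n \geq 0}$ is $k$-regular, with $A \cup B$ serving as the advertised generating set.

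For the product the argument is the same bilinear computation. Fixing $e$ and $i$ as above, I would expand each factor of
\begin{equation*}
(s(k^e n + i)\, t(k^e n + i))_{n \geq 0} = (s(k^e n + i))_{n \geq 0} \cdot (t(k^e n + i))_{n \geq 0}
\end{equation*}
as a finite $\mathbb{Z}$-linear combination of elements of $A$ and of $B$, say $\sum_{a \in A} c_a\, a$ and $\sum_{b \in B} d_b\, b$, and then distribute to obtain $\sum_{a, b} c_a d_b\, (a \cdot b)$. Thus $\ker_k((st)(n)_{n \geq 0})$ is contained in the $\mathbb{Z}$-module generated by the finite set $\{a \cdot b : a \in A,\ b \in B\}$ of componentwise products, proving that $(s(n) t(n))_{n \geq 0}$ is $k$-regular as well.

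No step is genuinely hard; the only point deserving care is the index bookkeeping, namely checking that the subsequence indexed by a fixed pair $(e, i)$ of a sum (or product) is exactly the sum (or product) of the subsequences indexed by that same $(e, i)$, which is immediate from the componentwise operations. I would also note that the phrase ``set of generators'' should be read as a generating set of a module \emph{containing} $\langle \ker_k((s+t)(n)_{n \geq 0})\rangle$: literal equality can fail, for instance when $t = -s$ forces the kernel module of $s + t$ to be trivial while $\langle A \cup B \rangle$ need not be, but the stated inclusion is all that is required to conclude $k$-regularity.
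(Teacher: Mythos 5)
Your argument is correct, but it differs from the paper's treatment for a simple reason: the paper offers no proof at all, only the citation to Theorem~2.5 of Allouche and Shallit~\cite{AS92}. What you have written is a self-contained reconstruction of that result: termwise addition and multiplication commute with passing to the subsequence indexed by a fixed pair $(e,i)$, so $\langle \ker_k((s(n)+t(n))_{n \geq 0}) \rangle \subseteq \langle A \cup B \rangle$ and $\langle \ker_k((s(n)t(n))_{n \geq 0}) \rangle \subseteq \langle \{ a\cdot b : a \in A,\, b \in B \} \rangle$, and in each case the right-hand module is finitely generated. The one step you leave implicit is that containment in a finitely generated $\mathbb{Z}$-module implies that $\langle \ker_k(\cdot) \rangle$ itself is finitely generated only because $\mathbb{Z}$ is Noetherian; this is the same convention used in \cite{AS92}, but it deserves a word. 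Your closing caveat is also well taken and amounts to a (harmless) correction of the paper's wording: the ``precisely'' clause, read as an equality of modules, can fail --- with $t = -s$ the module $\langle \ker_k((s(n)+t(n))_{n\geq 0}) \rangle$ is trivial while $\langle A \cup B \rangle$ need not be --- and only the inclusion holds in general. Be aware, though, that the paper invokes the equality reading in the proof of Theorem~\ref{thm:rank} to conclude exact values of the rank rather than mere upper bounds; in those applications the reverse inclusion $\langle A \cup B \rangle \subseteq \langle \ker_k(\nu_p(u_{n+1})_{n \geq 0}) \rangle$ must be (and can be) checked by hand, so the weaker but correct form of the lemma that you prove is the right statement, with the extra verification deferred to the point of use.
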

\begin{proof}
See~\cite[Theorem 2.5]{AS92}.
\end{proof}

\begin{lem}\label{lem:kreg_linsub}
If $s(n)_{n \geq 0}$ is a $k$-regular sequence, then for any integers $c \geq 1$ and $d \geq 0$ the subsequence $s(cn + d)_{n \geq 0}$ is $k$-regular.
\end{lem}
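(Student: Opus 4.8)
The plan is to reduce everything to the construction of one finitely generated $\mathbb{Z}$-module of sequences that contains the full $k$-kernel of $s(cn+d)_{n\geq 0}$; since a submodule of a finitely generated $\mathbb{Z}$-module is again finitely generated, this will prove $k$-regularity. By definition the members of $\ker_k(s(cn+d)_{n\geq 0})$ are the sequences $s(ck^e n+(ci+d))_{n\geq 0}$ with $e\geq 0$ and $0\leq i<k^e$. Write $M_0:=\langle\ker_k(s(n)_{n\geq 0})\rangle$, which is finitely generated by hypothesis, say $M_0=\langle g_1,\dots,g_\rho\rangle$, and let $S$ denote the unit shift $f(n)_{n\geq 0}\mapsto f(n+1)_{n\geq 0}$.

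The heart of the argument is a ``peeling'' identity collapsing the stretch factor of $s$. Set $c^\sharp:=c/k^{\nu_k(c)}$, the largest divisor of $c$ not divisible by $k$. From $\nu_k(x)=\min_{1\leq i\leq h}\lfloor\nu_{p_i}(x)/a_i\rfloor$ one obtains $\nu_k(ck^e)=e+\nu_k(c)$ for every $e\geq 0$; putting $j:=e+\nu_k(c)$ this reads $ck^e=k^j c^\sharp$ with $k\nmid c^\sharp$. Fixing $e\geq 0$ and an offset $m$ with $0\leq m<ck^e$, and writing $m=k^j m_1+m_0$ with $0\leq m_0<k^j$ (so $0\leq m_1<c^\sharp$), I would compute
\begin{equation*}
s(ck^e n+m)_{n\geq 0}=s\big(k^j(c^\sharp n+m_1)+m_0\big)_{n\geq 0}=g(c^\sharp n+m_1)_{n\geq 0},\qquad g:=s(k^j n+m_0)_{n\geq 0}\in\ker_k(s(n)_{n\geq 0}) ,
\end{equation*}
the membership holding because $0\leq m_0<k^j$. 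The decisive feature --- and the step I expect to be the main obstacle to pin down --- is that $c^\sharp$ is \emph{independent of $e$}: no matter how deeply one decimates, the stretch factor is always the fixed $k$-free part of $c$, and the residual index $m_1$ stays in the bounded window $[0,c^\sharp)$. Everything else is bookkeeping.

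Granting the peeling identity, finite generation is immediate. Consider the module
\begin{equation*}
V_0:=\big\langle\, s(ck^e n+m)_{n\geq 0}\ :\ e\geq 0,\ 0\leq m<ck^e \,\big\rangle .
\end{equation*}
The identity rewrites each defining generator of $V_0$ as $g(c^\sharp n+m_1)_{n\geq 0}$ with $g\in\ker_k(s(n)_{n\geq 0})$ and $0\leq m_1<c^\sharp$. Because the reindexing $g\mapsto g(c^\sharp n+m_1)_{n\geq 0}$ is $\mathbb{Z}$-linear and $\ker_k(s(n)_{n\geq 0})\subseteq M_0=\langle g_1,\dots,g_\rho\rangle$, it follows that $V_0$ is generated by the finitely many sequences $g_\ell(c^\sharp n+m_1)_{n\geq 0}$ with $1\leq\ell\leq\rho$ and $0\leq m_1<c^\sharp$; in particular $V_0$ is finitely generated.

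Finally I would fit $\ker_k(s(cn+d)_{n\geq 0})$ into a finitely generated module. Its member $s(ck^e n+(ci+d))_{n\geq 0}$ has offset $ci+d\leq ck^e-c+d$, so $\lfloor(ci+d)/(ck^e)\rfloor\leq\lfloor d/c\rfloor$; reducing $ci+d$ modulo $ck^e$ thus expresses this sequence as $S^{q}$ applied to a defining generator of $V_0$ for some $0\leq q\leq\lfloor d/c\rfloor$. Hence
\begin{equation*}
\ker_k(s(cn+d)_{n\geq 0})\ \subseteq\ V_0+SV_0+\cdots+S^{\lfloor d/c\rfloor}V_0 ,
\end{equation*}
a finite sum of $\mathbb{Z}$-linear images of the finitely generated module $V_0$, and therefore itself finitely generated. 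Taking $V$ to be this sum completes the proof, the one genuinely delicate input being the $e$-independence of the stretch factor established in the peeling step.
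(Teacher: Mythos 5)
Your proof is correct, and every step checks out: the factorization $ck^e=k^{e+\nu_k(c)}c^\sharp$ with $k\nmid c^\sharp$ holds (you do not even need Lemma~\ref{lem:kadic_min} for this, since $k\mid c^\sharp$ would contradict the maximality of $\nu_k(c)$); the peeling identity correctly writes each $s(ck^en+m)_{n\geq 0}$ with $0\leq m<ck^e$ as $g(c^\sharp n+m_1)_{n\geq 0}$ for some $g\in\ker_k(s(n)_{n\geq 0})$ and $0\leq m_1<c^\sharp$; and the bound $\lfloor(ci+d)/(ck^e)\rfloor\leq\lfloor d/c\rfloor$ is right, so $\langle\ker_k(s(cn+d)_{n\geq 0})\rangle$ sits inside the finitely generated module $V_0+SV_0+\cdots+S^{\lfloor d/c\rfloor}V_0$, and Noetherianity of $\mathbb{Z}$ finishes the job. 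One wording slip: your $V_0$ is \emph{contained in}, not necessarily \emph{generated by}, the module spanned by the $g_\ell(c^\sharp n+m_1)_{n\geq 0}$; this is harmless, because containment plus Noetherianity already gives finite generation, exactly as you set up in your first sentence.

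For comparison: the paper offers no proof of this lemma at all, only the citation \cite[Theorem~2.6]{AS92}, so yours is a self-contained replacement rather than a variant of an in-paper argument. It is, however, more elaborate than the standard proof, which needs neither the $k$-free part $c^\sharp$ nor the shift operator: given a kernel element $s(ck^en+ci+d)_{n\geq 0}$, divide the \emph{entire} offset by $k^e$, writing $ci+d=k^eq+r$ with $0\leq r<k^e$; then $s(ck^en+ci+d)_{n\geq 0}=g(cn+q)_{n\geq 0}$, where $g:=s(k^en+r)_{n\geq 0}\in\ker_k(s(n)_{n\geq 0})$ and $q\leq(c(k^e-1)+d)/k^e<c+d$, so the whole kernel lies in the module generated by the finitely many sequences $g_\ell(cn+q)_{n\geq 0}$ with $1\leq\ell\leq\rho$ and $0\leq q<c+d$. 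Your two-stage route (peel the powers of $k$ out of $c$, then shift away the overflow of size at most $\lfloor d/c\rfloor$) is correct but buys nothing over this one-step division; the ``delicate'' $e$-independence of $c^\sharp$ that you flag is simply replaced there by the $e$-independence of the bound $c+d$.
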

\begin{proof}
See~\cite[Theorem~2.6]{AS92}.
\end{proof}

\begin{lem}\label{lem:kreg_periodic}
Any periodic sequence is $k$-regular.
\end{lem}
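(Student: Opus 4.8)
The plan is to exhibit a single finitely generated $\mathbb{Z}$-module that contains the entire $k$-kernel, and then to conclude using the fact that $\mathbb{Z}$ is Noetherian. First I would let $P \geq 1$ be a period of $s(n)_{n \geq 0}$, so that $s(n + P) = s(n)$ for every $n \geq 0$. The central observation is that \emph{every} member of the $k$-kernel is again periodic of period dividing $P$. Indeed, fix $e \geq 0$ and $0 \leq i < k^e$; since $P \mid k^e P$, the sequence $s$ also has period $k^e P$, whence
\[
s(k^e(n + P) + i) = s(k^e n + i + k^e P) = s(k^e n + i)
\]
for all $n \geq 0$, so that $s(k^e n + i)_{n \geq 0}$ has period dividing $P$.

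Next I would consider the $\mathbb{Z}$-module $M$ consisting of all integer sequences that are periodic of period dividing $P$. Sending a sequence to the tuple of its first $P$ terms gives an isomorphism $M \cong \mathbb{Z}^P$, so $M$ is free of rank $P$, and in particular finitely generated. By the previous paragraph the $k$-kernel of $s(n)_{n \geq 0}$ is contained in $M$, and hence so is the $\mathbb{Z}$-module $\langle \ker_k(s(n)_{n \geq 0}) \rangle$ that it generates. Since $\mathbb{Z}$ is a principal ideal domain, and therefore Noetherian, every submodule of the finitely generated module $M$ is itself finitely generated; applying this to $\langle \ker_k(s(n)_{n \geq 0}) \rangle$ shows that $s(n)_{n \geq 0}$ is $k$-regular.

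I do not expect a serious obstacle here. The only point that genuinely requires care is the verification that passing to a subsequence along an arithmetic progression whose common difference is a power of $k$ preserves periodicity with the \emph{same} period $P$, which is immediate once one notes that $k^e P$ is a multiple of $P$; after that the entire weight of the argument rests on the Noetherian property of $\mathbb{Z}$. If one wishes to allow \emph{eventually} periodic sequences as well, the same scheme applies after splitting off a finite initial segment (which contributes only finitely many extra kernel elements), or equivalently after enlarging $M$ to the module of eventually periodic sequences with the relevant pre-period and period, which remains finitely generated.
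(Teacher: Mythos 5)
Your proof is correct, but it takes a genuinely different route from the paper. The paper disposes of this lemma by citation: an ultimately periodic sequence is $k$-automatic for every $k \geq 2$ (Allouche--Shallit, \emph{Automatic Sequences}, Theorem~5.4.2), and every $k$-automatic sequence is $k$-regular (Allouche--Shallit, \emph{The ring of $k$-regular sequences}, Theorem~1.2). You instead give a direct, self-contained argument: every element of the $k$-kernel is again periodic with period dividing $P$ (the computation $s(k^e(n+P)+i) = s(k^e n + i + k^e P) = s(k^e n + i)$ is exactly the right verification), so the module generated by the kernel sits inside the module $M \cong \mathbb{Z}^P$ of sequences of period dividing $P$, and Noetherianity of $\mathbb{Z}$ finishes the job. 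Both arguments are sound. Yours has the advantage of being elementary and of yielding extra quantitative information for free --- it shows the rank of a periodic sequence is at most $P$, and in fact it exhibits the kernel module as a submodule of a concrete free module of rank $P$ --- whereas the paper's citation route is shorter and covers ultimately periodic sequences without any extra work (your closing remark correctly sketches how to extend your argument to that case, though the lemma as stated does not need it). One stylistic note: since the paper elsewhere (Lemma~\ref{lem:kreg_sr}) builds exactly this kind of hands-on machinery for controlling kernels, your argument would fit the paper's spirit perfectly well; there is no gap to repair.
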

\begin{proof}
An ultimately periodic sequence is $k$-automatic for all $k \geq 2$, see~\cite[Theorem~5.4.2]{AS03b}. A $k$-automatic sequence is $k$-regular, see~\cite[Theorem~1.2]{AS92}.
\end{proof}

\begin{lem}\label{lem:kreg_sr}
Let $s(n)_{n \geq 0}$ be a sequence of integers.
If there exist some
\begin{equation}\label{equ:srinkker}
s_1 = s, s_2, \ldots, s_r \in \langle \ker_k(s(n)_{n \geq 0}) \rangle
\end{equation}
such that the sequences $s_j(k n + i)_{n \geq 0}$, with $0 \leq i < k$ and $1 \leq j \leq r$, are $\mathbb{Z}$-linear combinations of $s_1, \ldots, s_r$, then $s(n)_{n \geq 0}$ is $k$-regular and $\langle \ker_k(s(n)_{n \geq 0}) \rangle$ is generated by $s_1, \ldots, s_r$.
\end{lem}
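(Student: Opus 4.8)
The statement to prove is Lemma~\ref{lem:kreg_sr}, which gives a practical sufficient condition for $k$-regularity: if we can exhibit finitely many sequences $s_1 = s, s_2, \ldots, s_r$ lying in the $\mathbb{Z}$-module $\langle \ker_k(s(n)_{n \geq 0}) \rangle$ such that this collection is \emph{stable} under all $k$ decimation maps $t(n)_{n \geq 0} \mapsto t(kn+i)_{n \geq 0}$ (in the sense that each decimated sequence is a $\mathbb{Z}$-linear combination of the $s_j$), then $s$ is $k$-regular and the $s_j$ generate its $k$-kernel module.

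The plan is to let $M := \langle s_1, \ldots, s_r \rangle$ be the $\mathbb{Z}$-module generated by the given sequences and show that $M = \langle \ker_k(s(n)_{n \geq 0}) \rangle$. The inclusion $M \subseteq \langle \ker_k(s) \rangle$ is immediate from hypothesis~\eqref{equ:srinkker}, since each $s_j$ already belongs to the $k$-kernel module. The substantive direction is $\ker_k(s) \subseteq M$: I would prove that every kernel element $s(k^e n + i)_{n \geq 0}$, with $e \geq 0$ and $0 \leq i < k^e$, lies in $M$, by induction on $e$. The base case $e = 0$ gives only $s = s_1 \in M$. For the inductive step, first I would establish the key closure property that $M$ itself is stable under all the decimation operators $D_i \colon t(n)_{n \geq 0} \mapsto t(kn + i)_{n \geq 0}$ for $0 \leq i < k$: this follows because each $D_i$ is $\mathbb{Z}$-linear, and by hypothesis $D_i(s_j) \in M$ for every generator $s_j$, so $D_i$ carries any $\mathbb{Z}$-linear combination of the generators back into $M$.

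With that closure property in hand, the induction proceeds by writing any index of the form $k^{e+1} n + i$ with $0 \leq i < k^{e+1}$ in terms of a single outer decimation followed by a depth-$e$ kernel index. Writing $i = k^e q + i'$ via division, or more directly peeling off the innermost digit, one expresses $s(k^{e+1} n + i)_{n \geq 0}$ as $D_{i_0}$ applied to a sequence of the form $s(k^e m + i'')_{n \geq 0}$, which lies in $M$ by the inductive hypothesis; applying the closure property then yields that the original sequence lies in $M$. The careful bookkeeping of how the digit $i_0$ and the reduced index $i''$ relate to $i$ is the one place demanding attention, but it is elementary base-$k$ arithmetic. Concluding, $\ker_k(s) \subseteq M$, hence $\langle \ker_k(s) \rangle = M$ is finitely generated, so $s$ is $k$-regular with $k$-kernel module generated by $s_1, \ldots, s_r$.

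The main obstacle, such as it is, is purely notational: correctly decomposing a general kernel index $k^{e+1}n + i$ into a composition of the elementary operators $D_{i_0}$ and a kernel element at depth $e$, so that the induction machinery applies cleanly. There is no analytic or number-theoretic difficulty here; the content is entirely the linear-algebraic observation that a decimation-stable finitely generated module must contain the whole $k$-kernel. I expect this lemma to serve as the workhorse for the constructive regularity arguments later in the paper, where one will exhibit an explicit finite list $s_1, \ldots, s_r$ and verify the decimation relations by hand.
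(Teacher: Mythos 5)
Your proposal is correct and follows essentially the same route as the paper: both prove $\langle s_1,\ldots,s_r\rangle = \langle \ker_k(s(n)_{n\geq 0})\rangle$ by induction on $e$, writing a depth-$e$ kernel element as a single decimation $n \mapsto kn+j$ applied to a depth-$(e-1)$ kernel element (the decomposition $i = k^{e-1}j + i'$) and then invoking the stability of the module under decimations, which the paper keeps implicit and you state explicitly as a closure property. The only caution is your parenthetical alternative of peeling the \emph{innermost} digit: that expresses the kernel element as a depth-$e$ element of the decimated sequence $s(kn+i_0)_{n\geq 0}$ rather than a decimation of a depth-$e$ element of $s$, so it would require a strengthened induction hypothesis; your main decomposition avoids this issue.
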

\begin{proof}
It is sufficient to prove that $s(k^e n + i)_{n \geq 0} \in \langle s_1, \ldots, s_r \rangle$ for all integers $e \geq 0$ and $0 \leq i < k^e$.
In fact, this claim implies that $\langle \ker_k (s(n)_{n \geq 0}) \rangle \subseteq \langle s_1, \ldots, s_r \rangle$, while by (\ref{equ:srinkker}) we have $\langle s_1, \ldots, s_r \rangle \subseteq \langle \ker_k (s(n)_{n \geq 0}) \rangle$, hence $\langle \ker_k (s(n)_{n \geq 0}) \rangle = \langle s_1, \ldots, s_r \rangle$ and so $s(n)_{n \geq 0}$ is $k$-regular.
We proceed by induction on $e$. For $e = 0$ the claim is obvious since $s = s_1$.
Suppose $e \geq 1$ and that the claim holds for $e - 1$.
We have $i = k^{e-1} j + i^\prime$, for some integers $0 \leq j < k$ and $0 \leq i^\prime < k^{e-1}$.
Therefore, by the induction hypothesis,
\begin{align*}
s(k^e n + i)_{n \geq 0} &= s(k^{e-1} (kn + j) + i^\prime)_{n \geq 0} \\
 &\in \langle s_1(kn + j)_{n \geq 0}, \ldots, s_r(kn + j)_{n \geq 0} \rangle \\
 &\subseteq \langle s_1, \ldots, s_r \rangle ,
\end{align*}
and the claim follows.
\end{proof}

The next lemma is well-known, we give the proof just for completeness.

\begin{lem}\label{lem:kreg_lin}
The sequence $\nu_k(n + 1)_{n \geq 0}$ is $k$-regular of rank $2$.
Indeed, $\langle \ker_k(\nu_k(n + 1)_{n \geq 0}) \rangle$ is generated by $\nu_k(n+1)_{n \geq 0}$ and the constant sequence $(1)_{n \geq 0}$.
\end{lem}
\begin{proof}
For all nonnegative integers $n$ and $i < k$ we have
\begin{equation*}
\nu_k(kn + i + 1) = \begin{cases}
1 + \nu_k(n+1) & \text{ if } i = k - 1 , \\
0 & \text{ if } i < k - 1 .
\end{cases}
\end{equation*}
Therefore, putting $s_1 = \nu_k(n + 1)_{n \geq 0}$ and $s_2 = (1 + \nu_k(n + 1))_{n \geq 0}$ in Lemma~\ref{lem:kreg_sr}, we obtain that $\langle \ker_k(\nu_k(n + 1)_{n \geq 0}) \rangle$ is generated by $\nu_k(n+1)_{n \geq 0}$ and $(1 + \nu_k(n + 1))_{n \geq 0}$, hence it is also generated by $\nu_k(n+1)_{n \geq 0}$ and $(1)_{n \geq 0}$, which are obviously linearly independent.
Thus $\nu_k(n + 1)_{n \geq 0}$ is $k$-regular of rank $2$.
\end{proof}

Now we state a lemma that relates the $k$-adic valuation of an integer with its $p_i$-adic valuations.
The proof is quite straightforward and we leave it to the reader.

\begin{lem}\label{lem:kadic_min}
We have
\begin{equation*}
\nu_k(m) = \min_{i=1, \ldots, h} \left\lfloor \frac{\nu_{p_i}(m)}{a_i}\right\rfloor ,
\end{equation*}
for any integer $m \neq 0$.
\end{lem}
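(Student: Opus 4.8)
The plan is to unwind the definition of $\nu_k$ directly through the prime factorization of $k$, reducing everything to the familiar (and genuinely multiplicative) $p$-adic valuations $\nu_{p_i}$. Recall that $\nu_k(m)$ is by definition the largest nonnegative integer $e$ with $k^e \mid m$. Since $k = p_1^{a_1} \cdots p_h^{a_h}$, we have $k^e = p_1^{a_1 e} \cdots p_h^{a_h e}$, and because the $p_i$ are distinct primes, $k^e \mid m$ holds if and only if $p_i^{a_i e} \mid m$ for every $i$. First I would record this equivalence as the starting point of the argument.

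Next I would translate the divisibility conditions into inequalities on the $p$-adic valuations. For each $i$, the condition $p_i^{a_i e} \mid m$ is equivalent to $a_i e \leq \nu_{p_i}(m)$, that is, to
\begin{equation*}
e \leq \frac{\nu_{p_i}(m)}{a_i} .
\end{equation*}
Hence $k^e \mid m$ holds precisely when $e \leq \nu_{p_i}(m)/a_i$ for all $i$ simultaneously. Here is the one point where the integrality of $e$ must be used carefully: since $e$ is an integer, the inequality $e \leq \nu_{p_i}(m)/a_i$ is equivalent to $e \leq \lfloor \nu_{p_i}(m)/a_i \rfloor$. Imposing this for every $i$ gives that $k^e \mid m$ is equivalent to
\begin{equation*}
e \leq \min_{i = 1, \ldots, h} \left\lfloor \frac{\nu_{p_i}(m)}{a_i} \right\rfloor .
\end{equation*}

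Finally, I would conclude: the greatest integer $e$ satisfying this last inequality is exactly the right-hand side, and by definition that greatest $e$ is $\nu_k(m)$, which yields the claimed identity. The quantity on the right is a nonnegative integer (it is at worst $0$, attained for instance when $m = \pm 1$, in which case every $\nu_{p_i}(m) = 0$), so no degenerate cases arise. I do not expect any genuine obstacle here; the proof is purely definitional, and the only step requiring a moment's attention is the passage from $e \leq \nu_{p_i}(m)/a_i$ to the floor, which relies solely on $e \in \mathbb{Z}$.
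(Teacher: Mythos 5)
Your proof is correct, and it is precisely the straightforward definitional argument the paper has in mind: the paper states this lemma with ``the proof is quite straightforward and we leave it to the reader,'' and your chain of equivalences ($k^e \mid m$ iff $p_i^{a_i e} \mid m$ for all $i$, iff $e \leq \lfloor \nu_{p_i}(m)/a_i \rfloor$ for all $i$, using integrality of $e$) is exactly that omitted argument. No gaps; the one delicate step, passing to the floor, is handled correctly.
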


We conclude this section with two lemmas on the rank of apparition $\tau(n)$.

\begin{lem}\label{lem:tau_div}
For each prime number $p$ not dividing $b$, 
\begin{equation*}
\tau(p) \mid p - (-1)^{p-1}\left(\frac{\Delta}{p}\right) ,
\end{equation*}
where $\left(\tfrac{\cdot}{p}\right)$ denotes the Legendre symbol.
In particular, if $p \mid \Delta$ then $\tau(p) = p$.
\end{lem}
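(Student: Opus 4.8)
The plan is to reduce the recurrence modulo $p$ and read off the rank of apparition from the factorization type of the characteristic polynomial $f(x) = x^2 - ax - b$ over $\mathbb{F}_p$. Writing $\bar a, \bar b$ for the reductions of $a, b$ and letting $\bar\alpha, \bar\beta \in \overline{\mathbb{F}_p}$ be the roots of $\bar f(x) = x^2 - \bar a x - \bar b$, the reduced sequence $\bar u_n := (u_n \bmod p)$ obeys the same recurrence with the same initial data, so $\tau(p)$ is exactly the least $n \geq 1$ with $\bar u_n = 0$. Since $p \nmid b$ we have $\bar\alpha\bar\beta = -\bar b \neq 0$, so both roots are units; and the discriminant of $\bar f$ is $\bar\Delta$, so the factorization type is governed by $\left(\frac{\Delta}{p}\right)$.

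First I would treat the case $p \nmid \Delta$, where $\bar\alpha \neq \bar\beta$ and hence $\bar u_n = (\bar\alpha^n - \bar\beta^n)/(\bar\alpha - \bar\beta)$. If $\left(\frac{\Delta}{p}\right) = 1$ then $\bar\alpha, \bar\beta \in \mathbb{F}_p^\times$, and Fermat's little theorem gives $\bar\alpha^{p-1} = \bar\beta^{p-1} = 1$, whence $\bar u_{p-1} = 0$ and $\tau(p) \mid p - 1$. If $\left(\frac{\Delta}{p}\right) = -1$ then $\bar\alpha, \bar\beta \in \mathbb{F}_{p^2} \setminus \mathbb{F}_p$ are interchanged by the Frobenius $x \mapsto x^p$, so $\bar\alpha^p = \bar\beta$ and therefore $\bar\alpha^{p+1} = \bar\alpha\bar\beta = \bar\beta^{p+1}$; this forces $\bar u_{p+1} = 0$ and $\tau(p) \mid p + 1$. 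For odd $p$ (so $(-1)^{p-1} = 1$) these two subcases combine into $\tau(p) \mid p - \left(\frac{\Delta}{p}\right)$, as required.

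Next I would handle $p \mid \Delta$ with $p$ odd, where $\bar f$ has a double root $\bar\gamma = \bar a/2 \neq 0$. Here I would avoid dividing by $\bar\alpha - \bar\beta$ and instead use the integer identity $2^{n-1} u_n = \sum_{j \geq 0} \binom{n}{2j+1} a^{n-1-2j} \Delta^j$, which collapses modulo $p$ to $2^{n-1} u_n \equiv n\,a^{n-1} \pmod p$. Since $p \mid \Delta$ together with $p \nmid b$ forces $p \nmid a$, and $p$ is odd, we get $p \mid u_n \iff p \mid n$, so $\tau(p) = p$; as $\left(\frac{\Delta}{p}\right) = 0$, this matches $p - (-1)^{p-1}\left(\frac{\Delta}{p}\right) = p$ and also yields the ``in particular'' assertion. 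Finally the prime $p = 2$ (where $b$ is odd) I would dispatch by direct computation: if $a$ is even then $u_2 = a$ is even and $\tau(2) = 2 = p$ (the ramified case $2 \mid \Delta$), while if $a$ is odd then $u_3 = a^2 + b$ is even and $\tau(2) = 3 = p + 1$ (the inert case), both consistent with the stated divisibility.

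The main obstacle is the inert subcase: making rigorous the passage to $\mathbb{F}_{p^2}$, the identification $\bar\alpha^p = \bar\beta$ via Frobenius, and the claim that the reduction of the \emph{integer} $u_{p+1}$ vanishes in $\mathbb{F}_{p^2}$ (hence in $\mathbb{F}_p$, so $p \mid u_{p+1}$). The cleanest way to secure this is to work throughout with the reduced sequence $\bar u_n$ defined intrinsically by the recurrence over $\mathbb{F}_p$, to use the closed form only inside the splitting field, and to invoke that $\bar u_{p+1}$ is $\mathbb{F}_p$-valued and equals the reduction of $u_{p+1}$. A secondary delicate point is reconciling the precise symbol value at $p = 2$ with the factor $(-1)^{p-1}$, which is why I would verify $p = 2$ by hand rather than through the Legendre symbol.
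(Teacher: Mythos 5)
Your proof is correct, but it is genuinely different from what the paper does: the paper does not prove this lemma at all, it simply cites Somer (\cite[Lemma~1]{Som80}) for the case $p > 2$ and dismisses $p = 2$ with ``the case $p=2$ is easy.'' Your argument supplies the missing content in a self-contained way, and all three branches check out: in the split case ($p \nmid \Delta$, $\left(\frac{\Delta}{p}\right) = 1$) Fermat gives $\tau(p) \mid p-1$; in the inert case the Frobenius swap $\bar\alpha^p = \bar\beta$ gives $\bar\alpha^{p+1} = \bar\alpha\bar\beta = \bar\beta^{p+1}$, hence $\tau(p) \mid p+1$, and your insistence on defining $\bar u_n$ intrinsically over $\mathbb{F}_p$ and using the Binet form only in $\mathbb{F}_{p^2}$ is exactly the right way to make this rigorous; in the ramified case the identity $2^{n-1} u_n = \sum_{j \geq 0} \binom{n}{2j+1} a^{n-1-2j} \Delta^j$ collapses to $2^{n-1}u_n \equiv n a^{n-1} \pmod p$ (with $p \nmid a$ forced by $p \mid \Delta$, $p \nmid b$, $p$ odd), giving $\tau(p) = p$. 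Your decision to do $p = 2$ by direct computation ($a$ even gives $\tau(2)=2$, $a$ odd gives $\tau(2)=3$) mirrors the paper's separation of that case and is in fact the only safe option, since the literal Legendre/Kronecker symbol at $2$ would not make the displayed formula come out right; the statement is only ``consistent'' at $p=2$ under the naive convention that $\left(\frac{\Delta}{2}\right)$ is $1$ for odd $\Delta$ and $0$ for even $\Delta$, which is precisely what the factor $(-1)^{p-1}$ is engineered to accommodate, and which your hand computation establishes regardless of convention. What the paper's citation buys is brevity; what your route buys is a self-contained paper and a transparent explanation of why the symbol $\left(\frac{\Delta}{p}\right)$ governs $\tau(p)$, at the cost of roughly a page of finite-field bookkeeping.
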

\begin{proof}
The case $p = 2$ is easy. 
For $p > 2$ see \cite[Lemma~1]{Som80}.
\end{proof}

\begin{lem}\label{lem:tau_lcm}
If $m$ and $n$ are two positive integers relatively prime to $b$, then 
\begin{equation*}
\tau(\lcm(m,n)) = \lcm(\tau(m), \tau(n)) .
\end{equation*}
\end{lem}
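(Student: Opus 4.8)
The engine of the proof is the standard \emph{divisibility property} of the rank of apparition: for every positive integer $m$ relatively prime to $b$ and every nonnegative integer $N$,
\begin{equation*}
m \mid u_N \quad\Longleftrightarrow\quad \tau(m) \mid N .
\end{equation*}
Granting this, the lemma is immediate. Since $\gcd(m,b)=\gcd(n,b)=1$ we have $\gcd(\lcm(m,n),b)=1$, so $\tau(\lcm(m,n))$ is well defined. For any nonnegative integer $N$, the defining property of the least common multiple gives $\lcm(m,n)\mid u_N$ if and only if both $m\mid u_N$ and $n\mid u_N$; by the divisibility property the latter is equivalent to $\tau(m)\mid N$ and $\tau(n)\mid N$, i.e.\ to $\lcm(\tau(m),\tau(n))\mid N$. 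Thus the set of indices at which $\lcm(m,n)$ appears is exactly the set of multiples of $\lcm(\tau(m),\tau(n))$, and taking the least positive such $N$ yields $\tau(\lcm(m,n))=\lcm(\tau(m),\tau(n))$.

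It remains to justify the divisibility property, which I would prove from the addition formula
\begin{equation*}
u_{M+N} = u_{M+1} u_N + b\, u_M u_{N-1},
\end{equation*}
obtained by comparing entries of $C^{M+N}=C^M C^N$ for the companion matrix $C=\left(\begin{smallmatrix} a & b \\ 1 & 0\end{smallmatrix}\right)$. Writing $\tau=\tau(m)$ and taking $M=\tau$, reduction modulo $m$ gives $u_{N+\tau}\equiv u_{\tau+1}u_N \pmod m$; combined with $u_0=0$ this yields $m\mid u_{q\tau}$ for every $q\geq 0$ by induction on $q$, which is the implication $\tau\mid N\Rightarrow m\mid u_N$. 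For the converse, write $N=q\tau+s$ with $0\leq s<\tau$; taking $M=q\tau$ in the addition formula and using $m\mid u_{q\tau}$ gives $u_N\equiv u_{q\tau+1}u_s\pmod m$. Hence if $m\mid u_N$ then $m\mid u_{q\tau+1}u_s$, and the minimality of $\tau$ forces $s=0$ provided $\gcd(u_{q\tau+1},m)=1$.

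The main obstacle is precisely this last coprimality. I would establish it through the observation that any prime dividing two consecutive terms $u_j$ and $u_{j+1}$ must divide $b$: from $u_{j+1}=au_j+bu_{j-1}$ such a prime divides $b\,u_{j-1}$, and a descent down to $u_1=1$ shows it cannot avoid $b$. Since $\gcd(m,b)=1$, no prime factor of $m$ can divide two consecutive terms; as $m\mid u_{q\tau}$, this forces $\gcd(u_{q\tau+1},m)=1$, and the cancellation goes through. Alternatively, one may simply invoke the divisibility property as the classical fact recorded in \cite{Ren13}, in which case the whole argument reduces to the short chain of equivalences of the first paragraph.
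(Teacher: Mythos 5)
Your proof is correct, but it is genuinely more self-contained than the paper's, which consists of a single citation: the authors simply invoke \cite[Theorem~1(a)]{Ren13}, exactly the ``alternative'' you mention in your last sentence. Your argument supplies what that reference contains: the divisibility property $m \mid u_N \Leftrightarrow \tau(m) \mid N$, from which the lemma follows by the clean chain $\lcm(m,n) \mid u_N \Leftrightarrow (m \mid u_N \text{ and } n \mid u_N) \Leftrightarrow \lcm(\tau(m),\tau(n)) \mid N$. All the supporting steps check out: the matrix identity $C^{M+N}=C^M C^N$ does yield $u_{M+N}=u_{M+1}u_N + b\,u_M u_{N-1}$; the induction gives $\tau(m)\mid N \Rightarrow m \mid u_N$; and the converse is handled correctly, including the one delicate point, namely that $\gcd(u_{q\tau+1},m)=1$, which your descent argument (a prime dividing two consecutive terms must divide $b$) settles using $\gcd(m,b)=1$. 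One can note in passing that your equivalence chain also re-establishes well-definedness of $\tau(\lcm(m,n))$ for free, since $\lcm(m,n) \mid u_{\lcm(\tau(m),\tau(n))}$. The trade-off between the two routes is the usual one: the paper's citation keeps the exposition short and leans on Renault's treatment of ranks of apparition modulo $m$, while your version makes the lemma independent of outside sources at the cost of about a page of standard Lucas-sequence identities.
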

\begin{proof}
See \cite[Theorem~1(a)]{Ren13}.
\end{proof}

\section{Proof of Theorem~\ref{thm:un_kadic}}

Thanks to Lemma~\ref{lem:kadic_min}, we know that
\begin{equation}\label{equ:un_min}
\nu_k(u_n) = \min_{i=1, \ldots, h} \left\lfloor \frac{\nu_{p_i}(u_n)}{a_i}\right\rfloor .
\end{equation}
Moreover, from Lemma~\ref{lem:tau_lcm} it follows that
\begin{equation*}
\tau(p_1 \cdots p_h) = \lcm\{\tau(p_1), \ldots, \tau(p_h)\} .
\end{equation*}
Therefore, on the one hand, if $\tau(p_1 \cdots p_h) \nmid n$ then $\tau(p_i) \nmid n$ for some $i \in \{1, \ldots, h\}$, so that by Theorem~\ref{thm:un_padic} we have $\nu_{p_i}(u_n) = 0$, which together with (\ref{equ:un_min}) implies $\nu_k(u_n) = 0$, as claimed.

On the other hand, if $\tau(p_1 \cdots p_h) \mid n$ then $\tau(p_i) \mid n$ for $i=1,\ldots,h$.
Hence, from (\ref{equ:un_min}), Theorem~\ref{thm:un_padic}, and Lemma~\ref{lem:kadic_min}, we obtain
\begin{equation*}
\nu_k(u_n) = \min_{i=1, \ldots, h} \left\lfloor \frac{\nu_{p_i}(n) + \varrho_{p_i}(n)}{a_i}\right\rfloor = \min_{i=1, \ldots, h} \left\lfloor \frac{\nu_{p_i}(c_k(n) n)}{a_i}\right\rfloor = \nu_k(c_k(n) n) ,
\end{equation*}
so that the proof is complete.

\section{Proof of Theorem~\ref{thm:is_kreg}}

Clearly, if $k$ is fixed, then $c_k(n)$ depends only of the parity of $n$.
Thus it follows easily from Theorem~\ref{thm:un_kadic} that
\begin{equation}\label{equ:un_kadic_sum}
\nu_k(u_{n+1}) = \nu_k(c_k(1) (n+1)) \, s(n) + \nu_k(c_k(2) (n+1)) \, t(n) ,
\end{equation}
for each integer $n \geq 0$, where the sequences $s(n)_{n \geq 0}$ and $t(n)_{n \geq 0}$ are defined by
\begin{equation*}
s(n) := 
\begin{cases}
1 & \text{ if } \tau(p_1 \cdots p_2) \mid n + 1, \; 2 \nmid n + 1, \\
0 & \text{ otherwise},
\end{cases}
\end{equation*}
and
\begin{equation*}
t(n) := 
\begin{cases}
1 & \text{ if } \tau(p_1 \cdots p_2) \mid n + 1, \; 2 \mid n + 1, \\
0 & \text{ otherwise} .
\end{cases}
\end{equation*}
On the one hand, by Lemma~\ref{lem:kreg_lin} and Lemma~\ref{lem:kreg_linsub}, we know that both $\nu_k(c_k(1) (n+1))_{n \geq 0}$ and $\nu_k(c_k(2) (n+1))_{n \geq 0}$ are $k$-regular sequences.
On the other hand, by Lemma~\ref{lem:kreg_periodic}, also the sequences $s(n)_{n \geq 0}$ and $t(n)_{n \geq 0}$ are $k$-regular, since obviously they are periodic. 

In conclusion, thanks to (\ref{equ:un_kadic_sum}) and Lemma~\ref{lem:kreg_ring}, we obtain that $\nu_k(u_{n+1})_{n \geq 0}$ is a $k$-regular sequence.

\section{Proof of Theorem~\ref{thm:rank}}

First, suppose that $p \mid \Delta$.
By Lemma~\ref{lem:tau_div} we have $\tau(p) = p$.
Moreover, it is clear that $\varrho_p(n) = \varrho_p$ does not depend on $n$.
As a consequence, from Theorem~\ref{thm:un_padic} it follows easily that
\begin{equation}\label{equ:sum1}
\nu_p(u_{n+1}) = \nu_p(n + 1) + s(n) ,
\end{equation}
for any integer $n \geq 0$, where the sequence $s(n)_{n \geq 0}$ is defined by
\begin{equation*}
s(n) := \begin{cases}
\varrho_p & \text{ if } n + 1 \equiv 0 \bmod p , \\
0 & \text{ if } n + 1 \not\equiv 0 \bmod p .
\end{cases}
\end{equation*}
On the one hand, if $p \in \{2, 3\}$ and $\nu_p(u_p) = 1$, or if $p \geq 5$, then $\varrho_p = 0$.
Thus $s(n)_{n \geq 0}$ is identically zero and it follows by (\ref{equ:sum1}) and Lemma~\ref{lem:kreg_lin} that $r = 2$.
On the other hand, if $p \in \{2, 3\}$ and $\nu_p(u_p) \neq 1$, then $\varrho_p \neq 0$.
Moreover, for $i=0,\ldots,p-1$ we have
\begin{equation*}
s(pn + i) = \begin{cases} 
\varrho_p & \text{ if } i = p - 1 , \\
0 & \text{ if } i \neq p - 1 ,
\end{cases}
\end{equation*}
hence from Lemma~\ref{lem:kreg_sr} it follows that $s(n)_{n \geq 0}$ is $p$-regular and that $\langle\ker_p(s(n)_{n \geq 0})\rangle$ is generated by $s(n)_{n \geq 0}$ and $(\varrho_p)_{n \geq 0}$.
Therefore, by (\ref{equ:sum1}), Lemma~\ref{lem:kreg_lin}, and Lemma~\ref{lem:kreg_ring}, we obtain that $\nu_p(u_{n+1})_{n \geq 0}$ is a $p$-regular sequence and that $\langle\ker_p(\nu_p(u_{n+1})_{n \geq 0})\rangle$ is generated by $\nu_p(n+1)_{n \geq 0}$, $s(n)_{n \geq 0}$, and $(1)_{n \geq 0}$, which are clearly linearly independent, hence $r = 3$. 

Now suppose $p \nmid \Delta$.
By Lemma~\ref{lem:tau_div}, we know that $p \equiv \varepsilon \bmod \tau(p)$, for some $\varepsilon \in \{-1, +1\}$.
Furthermore, if $p = 2$ then it follows easily that $\tau(2) = 3$.
As a consequence, from Theorem~\ref{thm:un_padic} we obtain that
\begin{equation}\label{equ:sum2}
\nu_p(u_{n+1}) = s(n) + t(n) ,
\end{equation}
for any integer $n \geq 0$, where the sequences $s(n)_{n \geq 0}$ and $t(n)_{n \geq 0}$ are defined by
\begin{equation*}
s(n) := \begin{cases}
\nu_p(n+1) + v & \text{ if } n + 1 \equiv 0 \bmod \tau(p) \\
0 & \text{ if } n + 1 \not\equiv 0 \bmod \tau(p) ,
\end{cases}
\end{equation*}
with $v := \nu_p(u_{\tau(p)})$, and
\begin{equation*}
t(n) := \begin{cases}
\nu_2(u_6) - \nu_2(u_3) - 1 & \text{ if } p = 2, \; n + 1 \equiv 0 \bmod 6 , \\
0 & \text{ otherwise}. 
\end{cases}
\end{equation*}

We shall show that $s(n)_{n \geq 0}$ is a $p$-regular sequence of rank $\tau(p) + 1$.
Let us define the sequences $s_j(n)_{n \geq 0}$, for $j = 0,\ldots,\tau(p)-1$, by
\begin{equation*}
s_j(n) := \begin{cases}
1 & \text{ if } n + j + 1 \equiv 0 \bmod \tau(p) , \\
0 & \text{ if } n + j + 1 \not\equiv 0 \bmod \tau(p) .
\end{cases}
\end{equation*}
On the one hand, for $i = 0, \ldots, p - 2$ we have
\begin{align*}
s(pn+i) &= \begin{cases}
\nu_p(pn + i + 1) + v & \text{ if } pn + i + 1 \equiv 0 \bmod \tau(p) , \\
0 & \text{ if } pn + i + 1 \not\equiv 0 \bmod \tau(p) ,
\end{cases} \\
&= \begin{cases}
v & \text{ if } \varepsilon n + i + 1 \equiv 0 \bmod \tau(p) , \\
0 & \text{ if } \varepsilon n + i + 1 \not\equiv 0 \bmod \tau(p) ,
\end{cases} \\
&= \begin{cases}
v & \text{ if } n + (\varepsilon(i + 1) - 1) + 1 \equiv 0 \bmod \tau(p) , \\
0 & \text{ if } n + (\varepsilon(i + 1) - 1) + 1 \not\equiv 0 \bmod \tau(p) ,
\end{cases} \\
&= v \cdot s_{(\varepsilon(i + 1) - 1) \bmod \tau(p)}(n) ,
\end{align*}
since $p \nmid i + 1$ and consequently $\nu_p(pn + i +1) = 0$.

On the other hand,
\begin{align}\label{equ:ss0}
s(pn + p - 1) &= \begin{cases}
\nu_p(pn+p) + v & \text{ if } p(n + 1) \equiv 0 \bmod \tau(p) , \\
0 & \text{ if } p(n + 1) \not\equiv 0 \bmod \tau(p) ,
\end{cases} \\
&= \begin{cases}
\nu_p(n+1) + v + 1 & \text{ if } n + 1 \equiv 0 \bmod \tau(p) , \\
0 & \text{ if } n + 1 \not\equiv 0 \bmod \tau(p) ,
\end{cases} \nonumber\\
&= s(n) + s_0(n) , \nonumber
\end{align}
since $\nu_p(pn + p) = \nu_p(n + 1) + 1$ and $\gcd(p, \tau(p)) = 1$.

Furthermore, for $i = 0,\ldots, p - 1$ and $j = 0,\ldots, \tau(p) - 1$,
\begin{align*}
s_j(pn+i) &= \begin{cases}
1 & \text{ if } pn + i + j + 1 \equiv 0 \bmod \tau(p) , \\
0 & \text{ if } pn + i + j + 1 \not\equiv 0 \bmod \tau(p) , 
\end{cases} \\
&= \begin{cases}
1 & \text{ if } n + (\varepsilon(i + j + 1) - 1) + 1 \equiv 0 \bmod \tau(p) , \\
0 & \text{ if } n + (\varepsilon(i + j + 1) - 1) + 1 \not\equiv 0 \bmod \tau(p) , 
\end{cases} \\
&= s_{(\varepsilon(i + j + 1) - 1) \bmod \tau(p)}(n) .
\end{align*}
Summarizing, the sequences $s(pn+i)_{n \geq 0}$ and $s_j(pn+i)_{n \geq 0}$, for $i = 0,\ldots,p-1$ and $j=0,\ldots,\tau(p) - 1$, are $\mathbb{Z}$-linear combinations of $s(n)_{n \geq 0}$ and $s_j(n)_{n \geq 0}$.

Moreover, for $i = 0, \ldots, p^2 - 1$ we have
\begin{align}\label{equ:s0p2}
s_0(p^2n+i) &= \begin{cases}
1 & \text{ if } p^2n + i + 1 \equiv 0 \bmod \tau(p) , \\
0 & \text{ if } p^2n + i + 1 \not\equiv 0 \bmod \tau(p) , 
\end{cases} \\
&= \begin{cases}
1 & \text{ if } n + i + 1 \equiv 0 \bmod \tau(p) , \\
0 & \text{ if } n + i + 1 \not\equiv 0 \bmod \tau(p) , 
\end{cases} \nonumber\\
&= s_{i \bmod \tau(p)}(n) , \nonumber
\end{align}
hence, by (\ref{equ:s0p2}) and (\ref{equ:ss0}), it follows that
\begin{align}\label{equ:sitaup}
s_{i \bmod \tau(p)}(n)_{n \geq 0} &= s_0(p^2n+i)_{n \geq 0} \\
&= s(p^3 n + pi + p - 1)_{n \geq 0} - s(p^2 n + i)_{n \geq 0} \nonumber\\
&\in \langle \ker_p(s(n)_{n \geq 0}) \rangle . \nonumber
\end{align}
Since $\tau(p) \mid p - \varepsilon$, we have 
\begin{equation*}
\tau(p) \leq p - \varepsilon \leq p + 1 < p^2 , 
\end{equation*}
hence by (\ref{equ:sitaup}) we get that $s_j(n)_{n \geq 0} \in \langle \ker_p(s(n)_{n \geq 0}) \rangle$, for each $j=0,\ldots,\tau(p)-1$.

Therefore, in light of Lemma~\ref{lem:kreg_sr}, we obtain that $s(n)_{n \geq 0}$ is a $p$-regular sequence and that $\langle\ker_p(s(n)_{n \geq 0})\rangle$ is generated by $s(n)_{n \geq 0}$ and $s_j(n)_{n \geq 0}$, with $j = 0,\ldots,\tau(p)-1$.
It is straightforward to see that these last sequences are linearly independent, hence $s(n)_{n \geq 0}$ has rank $\tau(p) + 1$.

If $p > 2$, or if $p = 2$ and $\nu_2(u_6) = \nu_2(u_3) + 1$, then $t(n)_{n \geq 0}$ is identically zero, thus from (\ref{equ:sum2}) and the previous result on $s(n)$ we find that $r = \tau(p) + 1$.

So it remains only to consider the case $p = 2$ and $\nu_2(u_6) \neq \nu_2(u_3) + 1$.
Recall that in such a case $\tau(2) = 3$, and put $d := \nu_2(u_6) - \nu_2(u_3) - 1$.
Obviously, the sequence $t(2n)_{n \geq 0}$ is identically zero, while
\begin{align*}
t(2n + 1) &= \begin{cases}
d & \text{ if } \; 2n + 2 \equiv 0 \bmod 6 , \\
0 & \text{ if } \; 2n + 2 \not \equiv 0 \bmod 6 ,
\end{cases} \\
&= \begin{cases}
d & \text{ if } \; n + 1 \equiv 0 \bmod 3 , \\
0 & \text{ if } \; n + 1 \not \equiv 0 \bmod 3 ,
\end{cases} \\
&= d \cdot s_0(n) .
\end{align*}
Thus, again from Lemma~\ref{lem:kreg_sr}, we have that $t(n)$ is a $2$-regular sequence and that $\langle \ker_p(t(n)_{n \geq 0}) \rangle$ is generated by $t(n)_{n \geq 0}$ and $d \cdot s_j(n)_{n \geq 0}$, for $j=0,1,2$.

In conclusion, by (\ref{equ:sum2}) and Lemma~\ref{lem:kreg_ring}, we obtain that $\nu_p(u_{n+1})_{n \geq 0}$ is a $2$-regular sequence and that $\langle \ker_p(\nu_p(u_{n+1})_{n \geq 0}) \rangle$ is generated by $s(n)$, $t(n)$, and $s_j(n)$, for $j=0,1,2$, which are linearly independent, hence $r = 5$.
The proof is complete.

\section{Concluding remarks}

It might be interesting to understand if, actually, $\nu_k(u_{n + 1})_{n \geq 0}$ is $k$-regular for every integer $k \geq 2$, so that Theorem~\ref{thm:is_kreg} holds even by dropping the assumption that $k$ and $b$ are relatively prime.
A trivial observation is that if $k$ and $b$ have a common prime factor $p$ such that $p \nmid a$, then $p \nmid u_n$ for all integers $n \geq 1$, and consequently $\nu_k(u_{n + 1})_{n \geq 0}$ is $k$-regular simple because it is identically zero. 
Thus the nontrivial case occurs when each of the prime factors of $\gcd(b,k)$ divides $a$.

Another natural question is if it is possible to generalize Theorem~\ref{thm:rank} in order to say something about the rank of $\nu_k(u_{n + 1})_{n \geq 0}$ when $k$ is composite.
Probably, the easier cases are those when $k$ is squarefree, or when $k$ is a power of a prime number.

We leave these as open questions to the reader.

\providecommand{\bysame}{\leavevmode\hbox to3em{\hrulefill}\thinspace}

\end{document}